\documentclass[11pt]{amsart}

\title{Computing derivations on nilpotent quadratic Lie algebras}
\author{Pilar Benito}
\email{pilar.benito@unirioja.es}
\author{Javier R\'andez-Ib\'a\~nez}
\email{jarandez@unirioja.es}
\author{Jorge Rold\'an-L\'opez}
\email{jorge.roldanl@unirioja.es}
\address{Departamento de Matem\'aticas y Computaci\'on, Universidad de La Rioja, Logro\~no, La Rioja, Spain}
\date{\today}
\keywords{quadratic algebra, Lie algebra, double extension, , nilpotent Lie algebra, derivations, algorithms, symbolic computation}

\makeatletter
\@namedef{subjclassname@2020}{%
  \textup{2020} Mathematics Subject Classification}
\makeatother

\subjclass[2020]{17A45, 17B05, 17B30, 17B40, 17-08, 68W30} 

\usepackage[utf8]{inputenc}
\usepackage[T1]{fontenc}
\usepackage[english]{babel}

\usepackage{amsmath, amsthm, amssymb, enumerate}
\usepackage{hyperref, tikz, breakcites, todonotes}

\theoremstyle{plain}
	\newtheorem{lemma}{Lemma}[section]
	\newtheorem{proposition}[lemma]{Proposition}
	\newtheorem{theorem}[lemma]{Theorem}
	
\theoremstyle{remark}
	\newtheorem{remark}[lemma]{Remark}
	\newtheorem{example}{Example}[section]

\newcommand{\mfa}{\mathfrak{a}}
\newcommand{\mfg}{\mathfrak{g}}
\newcommand{\mfn}{\mathfrak{n}}

\newcommand{\mfFL}{\mathfrak{FL}}
\newcommand{\mfsl}{\mathfrak{sl}}
\newcommand{\mfso}{\mathfrak{so}}
\newcommand{\mfsu}{\mathfrak{su}}
\newcommand{\mbF}{\mathbb{F}}
\newcommand{\mbR}{\mathbb{R}}

\DeclareMathOperator{\ad}{ad}
\DeclareMathOperator{\id}{Id} 
\DeclareMathOperator{\spa}{span}
\DeclareMathOperator{\inner}{Inner}
\DeclareMathOperator{\rad}{Rad}
\DeclareMathOperator{\der}{Der}
\DeclareMathOperator{\soc}{soc}

\begin{document}
\begin{abstract}
	Every non-solvable and non-semisimple quadratic Lie algebra can be obtained as a double extension of a solvable quadratic Lie algebra. Thanks to a partial classification of nilpotent Lie algebras and this result, we can design different techniques to obtain any quadratic Lie algebra whose (solvable) radical ideal is nilpotent. To achieve this, we propose two alternative methods, both involving the use of quotients. In addition to their mathematical description, both approaches introduced in this paper have been computationally implemented and are publicly available to use for generating these algebras.
\end{abstract}
\maketitle

\section{Introduction}

Quadratic Lie algebras were introduced in 1957 by Tsou and Walker (see~\cite{Tsou_Walker_1957}). They are Lie algebras $L$ endowed with a nondegenerate symmetric bilinear form $\varphi$ which is invariant with respect to their Lie product, that is, for every $x,y,z \in L$,
\begin{equation*}
	\varphi([x,y],z) + \varphi(y,[x,z]) = 0.
\end{equation*}

There exists a wide variety of quadratic Lie algebras. Aside from the abelian ones, which are trivially quadratic, all semisimple Lie algebras are also quadratic. Indeed, in an algebraically closed field, the only invariant non-degenerate symmetric bilinear forms for simple Lie algebras are non-zero scalar multiples of their Killing form. Even more, the non-degeneration of the Killing form characterizes these algebras. However, studying general quadratic Lie algebras for non-reductive\footnote{A Lie algebra is reductive if its solvable radical is just its centre.} Lie algebras is a challenging task (see \cite{Ovando_2016} and references therein for a survey guide). Two different approaches have been employed over time:
\begin{itemize}
	\item Classifying a small subfamily within these quadratic Lie algebras (small dimension, metabelian, current Lie, \dots).
	\item Constructing quadratic Lie algebras through various techniques (double extension, $T^*$-extension, inflactions, two-fold extensions, quadratic families of matrices, direct sums, tensor products, \dots).
\end{itemize}
Utilizing these construction ideas, we can obtain every non-solvable and non-semisimple quadratic Lie algebra via a double extension of some solvable quadratic Lie algebra \cite[Theorem 2.2]{Bordemann_1997}. At the same time, all solvable quadratic Lie algebras can be generated by applying iteratively double extensions, starting from nilpotent quadratic Lie algebras \cite{Favre_Santharoubane_1987}. Furthermore, these nilpotent quadratic Lie algebras can be seen as quotients of some free nilpotent Lie algebras \cite[Section 4]{Benito_delaConcepcion_Laliena_2017}, opening up a path for work.

The purpose of this paper is to introduce the mathematical objects, results and methods that are useful in the theory of quadratic Lie algebras. Also it serves us to present some essential computational functions we have developed for constructing non-solvable quadratic Lie algebras with nilpotent radical. Our algorithms are in a similar vein to some of those proposed by W. A. De Graaf in \cite{DeGraaf_2000}. We are going to take as a starting point the classification of quadratic nilpotent Lie algebras with few generators and a small nilpotency index in \cite[Sections 5 and 6]{Benito_delaConcepcion_Laliena_2017}. Then, employing double extensions, we are going to obtain non-solvable quadratic algebras from these nilpotent ones. Additionally, to obtain different examples, we introduce two distinct approaches based on the general results. All the results and methods mentioned in this article are available in our GitHub repository located at
\begin{center}
	\url{https://github.com/joroldan/MathematicaLieFunctions}
\end{center}

\noindent There, you can find the Wolfram Mathematica\footnote{For occasional and sporadic usage, at this moment, you can create a Wolfram Account and execute the package online for free.} package and all its source code, which anyone can download, use, inspect or modify.

Finally, it is essential to highlight the significance and popularity of the algebras studied in this paper. For an in-depth exploration of topics in which the structure of this algebras holds special interest see \cite[Section 1]{Bordemann_1997}, \cite[Section 7]{FigueroaOFarrill_Stanciu_1996}, \cite{Baum_Kath_2003}, \cite[Sections 3 and 4]{Zusmanovich_2014} and \cite{RodriguezVallarte_Salgado_2018}. In the literature and as of the current moment, most of the examples, results and constructions refer to solvable quadratic algebras and rather few involve non-solvable quadratic ones. In addition to semisimple class, interesting examples of non-solvable quadratic algebras appear in the class of Lorentzian algebras (see~\cite[Theorem II.6.4]{Hilgert_Hofmann_Lawson_1989}) or in the class of current Lie algebras \cite{Zusmanovich_2014}. We can already find some published papers working on the examples and constructions of non-solvable and non-semisimple quadratic Lie algebras like \cite[Proposition A]{Hofmann_Keith_1986}, \cite[Section~5]{Bajo_Benayadi_1997} and \cite{Benayadi_Elduque_2014}. The techniques used in these papers are based on representation theory and tensor products of quadratic Lie algebras by commutative algebras. In contrast, as we are going to point out through the paper, our approach, which provides computational algorithms, is applicable to obtain all these algebras and, even more, it is also valid for a broader variety of algebras.

The paper splits into 7 sections. Section~\ref{s:definitions} includes the mathematical basic background that we will employ throughout the paper. In Section~\ref{s:deconstruction}, two main results on quadratic Lie algebras are presented. Those results serve as the guiding thread that justifies the relevance of quadratic nilpotent algebras on the construction of the general quadratic ones. The findings on this section lead us to Section~\ref{s:nilpotent} and \ref{s:derivations} where main computational techniques are developed and illustrated. All this results end up in the computation of skew-derivations in Section~\ref{s:computations} in order to extend nilpotent Lie algebras to produce non solvable quadratic Lie algebras. The final section serves as a conclusion.

Unless we indicate the contrary we work over a generic field $\mbF$ of characteristic zero. All vector spaces we consider are finite dimensional.

\section{Definitions and general techniques}\label{s:definitions}

First, let us define some basic terminology taking as reference the book~\cite{Jacobson_1979}. Let $\mfn$ be a Lie algebra, we can define its lower central series recursively as $\mfn^1 = \mfn$ and $\mfn^t = [\mfn^{t-1}, \mfn]$. When this series reaches zero, we say $\mfn$ is a nilpotent Lie algebra, and $t$ is its nilpotency index or nilindex whenever $\mfn^t \neq 0$ but $\mfn^{t+1} = 0$. In this case, we can also write $\mfn$ is $t$-step. Moreover, the codimension of $\mfn^2$ in $\mfn$ is referred as the type of the nilpotent Lie algebra, and it coincides with the cardinality of a minimal set of generators of the nilpotent Lie algebra. This result appeared in \cite[Section~1, Corollary~1.3]{Gauger_1973} and as an exercise in \cite[Chapter~1, Exercise~10]{Jacobson_1979} and states that $\{x_1, \dots, x_d\}$ generates $\mfn$ if and only if $\{x_1+\mfn^2, \dots, x_d+\mfn^2\}$ generates $\mfn/\mfn^2$. And, it is a minimal set of generators if at the same time $\{x_1, \dots, x_d\}$ forms a basis for some subspace $\mfg$ such that $\mfn = \mfg \oplus \mfn^2$. Moreover, in a general (non-necessarily nilpotent) Lie algebra, its largest nilpotent ideal is referred to as the nilradical of the algebra.

Next, we will denote as $\mfn_{d,t}$ the free $t$-step nilpotent Lie algebra of $d$ generators (see \cite{Benito_delaConcepcion_2013} and references therein for a formal specification and patterns). Just to give a brief definition (for the notion of free Lie algebras, we follow \cite[Chapter~V, Section~4]{Jacobson_1979}): let $\mfFL(d)$ be the free Lie algebra on a set of generators $\{x_1, \dots, x_d\}$, over a field $\mbF$. In this context, $\mfn_{d,t}$ is defined as the quotient algebra
\begin{equation*}
	\mfn_{d,t} = \mfFL(d)/\mfFL(d)^{t+1}.
\end{equation*}
The elements of $\mfFL(d)$ are linear combinations of monomials
\begin{equation*}
	[x_{i_1}, \dots, x_{i_s}] = [\dots [[x_{i_1}, x_{i_2}], x_{i_3}], \dots, x_{i_s}],
\end{equation*}
where $s \geq 1$. Hence, the free nilpotent algebra $\mfn_{d,t}$ is generated as vector space by $s$-monomials $[x_{i_1}, \dots, x_{i_s}]$, for $1 \leq s \leq t$. Although for obtaining simpler basis, in this article we are also using products of these $s$-monomials.

Similar to the lower central series, there exists another recursive series $\mfn^{(0)} = \mfn$ and $\mfn^{(t)} = [\mfn^{(t-1)}, \mfn^{(t-1)}]$. Again, when this series reaches zero, we say our algebra $\mfn$ is solvable, and by definition, the largest solvable ideal of some Lie algebra is its solvable radical. It is straightforward to verify that nilpotent implies solvable. Throughout this paper, we focus on Lie algebras whose radical coincides with their nilradical. This radical plays a crucial role in the well-known Levi decomposition, that states that any Lie algebra $L$ decomposes as $R \oplus S$ where $R$ is the solvable radical ideal and $S$ is some semisimple subalgebra.

From now on, $(\mfn, \varphi)$ will denote a quadratic nilpotent Lie algebra, and $\mfn_{d,t}$ will represent a free $t$-step nilpotent Lie algebra of $d$ generators.

Apart from the previous concepts, to refine our classification, we focus solely on indecomposable quadratic Lie algebras. A quadratic algebra $(L, \varphi)$ is called decomposable if it contains a proper ideal $I$ that is non-degenerated (i.e. $\varphi|_{I\times I}$ is non-degenerate), and indecomposable otherwise. Indecomposable algebras are part of a broader family: the reduced ones. A Lie algebra $L$ is considered reduced when $Z(L) \subseteq L^2$. Non-reduced algebras can be decomposed as expected, following \cite[Theorem 6.2]{Tsou_Walker_1957}, which states that any non-reduced and non-abelian quadratic Lie algebra $(L, \varphi)$ decomposes as an orthogonal direct sum of proper ideals for $L = L_1 \oplus \mfa$, and $\varphi = \varphi_1 \perp \varphi_2$, where $(L_1, \varphi_1)$ is a quadratic reduced Lie algebra, and $(\mfa, \varphi_2)$ is a quadratic abelian algebra. However, being reduced does not necessarily imply being indecomposable.

Besides these definitions, we need to explain the concept of double extension. This technique appeared in several independent works during the 1980s (see~\cite{Medina_Revoy_1985}, \cite{Hofmann_Keith_1986}, \cite{Favre_Santharoubane_1987}). Double extension is a multistep procedure involving two extensions: first a central extension, and later a semidirect product. To apply this method, we require a starting quadratic Lie algebra $(L,\varphi)$, another Lie algebra $B$ and a Lie homomorphism $\phi\colon B \to \der_\varphi(L)$. Here, $\der_\varphi(L)$ denotes the derivations\footnote{A derivation $d$ of a Lie algebra $L$ is an endomorphism such that for every $x,y \in L$ $d([x,y]) = [d(x),y] + [x,d(y)]$.} of $L$ such that are invariant with respect to $\varphi$, that is, $\varphi(d(x), x') + \varphi(x, d(x')) = 0$. That is, the derivation $d$ is a skew-adjoint map respect to $\varphi$. From now on, we will refer to these derivations as $\varphi$-skew. With all those three ingredients, we construct the quadratic Lie algebra $(L_B, \varphi_B) := (B \oplus L \oplus B^*, \varphi_B)$ with Lie bracket
\begin{multline*}
	[b+x+\beta,b'+x'+\beta] := [b,b'] + \phi(b)(x') - \phi(b')(x) + [x, x'] \\+ w(x,x') + \beta\circ \ad b'-\beta'\circ \ad b
\end{multline*}
and bilinear form
\begin{equation*}
	\varphi_B(b + x + \beta, b' + x' + \beta') := \beta(b') + \beta'(b) + \varphi(x,x'),
\end{equation*}
where $w\colon L \times L \to B$ maps $w(x, x')(b) := \varphi(\phi(b)(x), x')$. This procedure will be used through all the paper to obtain larger and more general algebras, and, at the same time, to deconstruct algebras applying it the other way around. In the particular case $B=\mbF \cdot b$, we have $B^*=\mbF \cdot b^*$ where $b^*(\alpha b)=\alpha$ for any $\alpha\in \mbF$, and the bracket product reduces to (here $d:=\phi(b)$)
\begin{equation*}
	[t b+x+\beta,s b+x'+\beta] := td(x') - sd(x) + [x, x']+ \varphi(d(x),x')b^*.
\end{equation*}

Finally, when dealing with quadratic Lie algebras $(L,\varphi)$, orthogonal subspaces, $U^\perp=\{x\in L: \varphi(x,U)=0\}$, become a powerful tool for studying ideals. Firstly, $I$ is an ideal if and only if $[I,I^\perp] = 0$ and $(I^\perp)^\perp = I$ (check for instance \cite{FigueroaOFarrill_Stanciu_1996}). Also, $(L^2)^\perp = Z(L)$ and vice versa (see~\cite{Tsou_Walker_1957}). In addition, given two subalgebras $L_1$ and $L_2$, we have $(L_1 \cap L_2)^\perp = L_1^\perp + L_2^\perp$ and $(L_1 + L_2)^\perp = L_1^\perp \cap L_2^\perp$. Even more, the orthogonal of the Jacobson radical $\mathcal{J}(L)$, defined originally as $[L,R] = L^2 \cap R$, coincides with $\soc(L)$, the socle of $L$, which is defined as the sum of all minimal ideals. This is a consequence of \cite{Marshall_1967} as $\mathcal{J}(L)$ can also be obtained as the intersection of all maximal ideals.

\section{Preliminary deconstruction}\label{s:deconstruction}

As mentioned in the introduction, thanks to Levi decomposition theorem, for a given Lie algebra $L$ we can decompose $L = R \oplus S$ where $R$ is the radical (largest solvable ideal of $L$) and $S$ is some semisimple subalgebra. Furthermore, when $L$ is quadratic, we can give more details in such decomposition as stated in the next proposition and theorem which are the main results of this section.

\begin{proposition}\label{prop:items}
	Let $L$ be a non-solvable quadratic Lie algebra, then
	\begin{enumerate}[\quad (i)]
		\item $R^\perp \oplus Z(L) = \soc (L)=\mathcal{J}(L)^\perp$, \label{it:pd2}
		\item $R^\perp \cong S^*$ as adjoint and coadjoint $S$-modules, \label{it:pd2m}
		\item $R^\perp \cong (L/R)^*$ as adjoint and coadjoint $L$-modules. \label{it:pd3}
	\end{enumerate}
	Even more, if $L$ has no simple ideals then:
	\begin{enumerate}[\quad (i)]
		\setcounter{enumi}{2}
		\item $Z(R)=R^\perp \oplus Z(L)$, \label{it:pd0}
		\item $L = S \oplus (S^\perp \cap R) \oplus R^\perp$. \label{it:pd1}
	\end{enumerate}
\end{proposition}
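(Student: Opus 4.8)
The plan is to keep the Levi decomposition $L=R\oplus S$ fixed and to translate every item into a computation with orthogonal complements, using the facts collected at the end of Section~\ref{s:definitions}: orthogonality interchanges sums with intersections, $I$ is an ideal iff $[I,I^\perp]=0$ and $(I^\perp)^\perp=I$, $(L^2)^\perp=Z(L)$, and $\mathcal{J}(L)^\perp=\soc(L)$ with $\mathcal{J}(L)=L^2\cap R$. First I would record two remarks. Since $L/R\cong S$ is semisimple, $L=L^2+R$; taking orthogonals, $R^\perp\cap Z(L)=R^\perp\cap (L^2)^\perp=(R+L^2)^\perp=0$, so the sum $R^\perp+Z(L)$ is automatically direct. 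Also $R^\perp$ is an ideal (orthogonal of the ideal $R$) and $[R^\perp,R]=0$ by the ideal criterion, so under the adjoint action $R$ annihilates $R^\perp$ and $R^\perp$ becomes an $L/R\cong S$-module. Granting these, (i) is a one-line computation: $\mathcal{J}(L)^\perp=(L^2\cap R)^\perp=(L^2)^\perp+R^\perp=Z(L)+R^\perp$, the sum is direct by the first remark, and $\mathcal{J}(L)^\perp=\soc(L)$ was recalled in Section~\ref{s:definitions}.

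For (iii) I would use the map $\Phi\colon L\to L^*$, $\Phi(x)=\varphi(x,\cdot)$. Non-degeneracy of $\varphi$ makes $\Phi$ bijective, and invariance makes it a morphism from the adjoint module $L$ to the coadjoint module $L^*$, since $\Phi([x,z])(y)=\varphi([x,z],y)=-\varphi(z,[x,y])=(x\cdot\Phi(z))(y)$. As $x\in R^\perp$ exactly when $\Phi(x)$ annihilates $R$, the map $\Phi$ restricts to a module isomorphism from $R^\perp$ onto the annihilator of $R$ in $L^*$, and the latter is canonically a coadjoint $L$-module isomorphic to $(L/R)^*$; this is (iii). Restricting the $L$-action to $S$ and using that the projection $L\to L/R$ restricts to an isomorphism of $S$-modules $S\xrightarrow{\ \sim\ }L/R$ (kernel $S\cap R=0$), dualization gives $(L/R)^*\cong S^*$ as $S$-modules, hence $R^\perp\cong S^*$ as claimed in (ii).

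The crux, needed for both (iv) and (v), is the claim that \emph{if $L$ has no simple ideals then $R^\perp\subseteq R$}. Here I would argue as follows: $R^\perp$ is an $S$-module under the adjoint action, hence completely reducible (Weyl's theorem, characteristic $0$), and $R^\perp\cap R$ is an $S$-submodule (intersection of the ideals $R$ and $R^\perp$, hence $S$-stable), so there is an $S$-module complement $M$ with $R^\perp=(R^\perp\cap R)\oplus M$ and $M\cap R=0$. This $M$ is in fact an ideal of $L$, because $[L,M]=[R,M]+[S,M]\subseteq [R,R^\perp]+M=M$ (using $M\subseteq R^\perp$ and that $M$ is an $S$-submodule). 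The radical of the ideal $M$ is $M\cap R=0$, so $M$ is a semisimple ideal; and if $M\neq 0$, a simple summand $\mathfrak{m}$ of $M$ is an ideal of $L$ as well, since $\ad_L(x)|_M$ is a derivation of the semisimple algebra $M$ and therefore equals $\ad_M(m_x)$ for some $m_x\in M$, whence $[L,\mathfrak{m}]=[m_x,\mathfrak{m}]\subseteq\mathfrak{m}$. This contradicts the hypothesis, forcing $M=0$ and $R^\perp=R^\perp\cap R\subseteq R$. I expect this extraction of a simple ideal of $L$ from the complement $M$ to be the only genuinely delicate point; everything else is bookkeeping with $\perp$.

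With the claim in hand, (iv) and (v) come quickly. Invariance gives $[z,R]=0\iff\varphi(z,[R,L])=0\iff z\in[R,L]^\perp=\mathcal{J}(L)^\perp=\soc(L)$, so the centralizer of $R$ in $L$ equals $\soc(L)=R^\perp\oplus Z(L)$ by (i); as $Z(L)\subseteq R$ always and $R^\perp\subseteq R$ by the claim, we get $\soc(L)\subseteq R$, and therefore $Z(R)$, being the intersection of this centralizer with $R$, equals $\soc(L)=R^\perp\oplus Z(L)$, which is (iv). For (v), the claim puts $R^\perp$ inside $R$; moreover $R^\perp\cap S^\perp=(R+S)^\perp=0$ and $R^\perp\cap S\subseteq R\cap S=0$, so a dimension count — $\dim R^\perp=\dim L-\dim R=\dim S$, and $\dim(S^\perp\cap R)=\dim L-\dim(S+R^\perp)=\dim L-2\dim S=\dim R-\dim S$ — shows $R=R^\perp\oplus(S^\perp\cap R)$, and adjoining the Levi complement $S$ yields $L=S\oplus(S^\perp\cap R)\oplus R^\perp$, which is (v).
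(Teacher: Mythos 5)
Your proof is correct, and for items (i)--(iii) it follows essentially the same path as the paper (the same orthogonality computation for (i), and the map $x\mapsto\varphi(x,\cdot)$ for (ii)--(iii), with the only cosmetic difference that you deduce (ii) from (iii) by restricting to $S$, whereas the paper builds the two isomorphisms $\Omega$ and $\hat\Omega$ separately). Where you genuinely diverge is at the crux, the claim $R^\perp\subseteq R$, and at item (iv). The paper gets $R^\perp\subseteq R$ almost for free from item (i): $R^\perp\subseteq\soc(L)$, and since $L$ has no simple ideals every minimal ideal is abelian, so $\soc(L)\subseteq N(L)\subseteq R$. Your route instead uses Weyl's theorem to split off an $S$-module complement $M$ of $R^\perp\cap R$ in $R^\perp$, shows $M$ is a semisimple ideal of $L$, and extracts a simple ideal from it via the fact that derivations of a semisimple algebra are inner; this is a sound and self-contained argument (the standard ``semisimple ideals have simple components that are ideals of the ambient algebra'' lemma), but it is longer and reproves structure theory that the socle argument sidesteps. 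For (iv) the paper verifies the two inclusions $R^\perp\oplus Z(L)\subseteq Z(R)$ and $Z(R)\subseteq\soc(L)$ separately (the latter by decomposing $Z(R)$ into irreducible $S$-modules, each a minimal ideal), while you identify $C_L(R)=[L,R]^\perp=\mathcal{J}(L)^\perp=\soc(L)$ in one stroke and intersect with $R$; your version is arguably cleaner and makes the equality $Z(R)=\soc(L)$ explicit. Finally, for (v) you replace the paper's nondegeneracy-of-$R^\perp\oplus S$ argument with a dimension count inside $R$; both are valid.
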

\begin{proof}
	Let us start proving the first item. Observe, $R^\perp \oplus Z(L)$ is a direct sum as if $x \in R^\perp \cap Z(L)$, then $x \in \rad(\varphi) = L^\perp = 0$ because
	\begin{equation*}
		\varphi(x, L) = \varphi(x, S + R) = \varphi(x,S) = \varphi(x,[S,S]) = \varphi([x,S],S)=0.
	\end{equation*}
	Now, as the Jacobson radical $\mathcal{J}(L) = L^2 \cap R$ is the intersection of all maximal ideals, its orthogonal $\mathcal{J}(L)^\perp = (L^2 \cap R)^\perp = (L^2)^\perp + R^\perp = Z(L) + R^\perp$ must be the sum of all minimal ideals, the socle of $L$, $\soc(L)$, as mentioned at the end of previous section. An alternative proof for this item can be found in \cite[Corollary 4.2]{Benayadi_2003}.

	For item~\eqref{it:pd2m} we use the $S$-module homomorphism $\Omega\colon R^\perp \to S^*$ defined as $x\mapsto \varphi_x$ with $\varphi_x(s)=\varphi(x,s)$ for all $s\in S$. Note that $\ker\, \Omega=R^\perp\cap S^\perp=(S\oplus R)^\perp=0$ and $\dim R^\perp=\dim L-\dim R=\dim S=\dim S^*$ . Therefore, $\Omega$ is an isomorphism.

	Next, item~\eqref{it:pd3} is a straightforward computation as $R^\perp \cong (L/R)^*$ as $L$-modules using the endomorphism $\hat{\Omega}\colon R^\perp \to (L/R)^*$ defined as $x\mapsto \hat{\varphi}_x\colon L/R\to \mathbb{F}$ with $\hat{\varphi}_x(y+R)=\varphi(x,y)$.

	The final assertions concerning no simple ideals are as follows. For item~\eqref{it:pd0}, on the one hand, by item~\eqref{it:pd2}, $R^\perp \subseteq \soc(L)$, but as there are no simple ideals, all minimal ideals are abelian, and then $\soc(L) \subseteq N(L) \subseteq R$. On the other hand, as $R$ is an ideal, $[R, R^\perp] = 0$, so $R^\perp \subseteq C_L(R)$. Thus $R^\perp\oplus Z(L) \subseteq R \cap C_L(R) = Z(R)$. For reverse content, as $Z(R)$ is an ideal, it decomposes as a direct sum of irreducible $S$-modules. And each summand is a minimal ideal as $[Z(R),R]=0$.

	The last item comes from Levi decomposition $L= S \oplus R$. There, taking orthogonal complements, as $L$ is quadratic, we obtain $L^\perp = S^\perp \cap R^\perp = 0$. Note $R^\perp \subseteq R$ because of item~\eqref{it:pd0}. Therefore, we also have $S \cap R^\perp \subseteq S \cap R = 0$, thus we can compute $(R^\perp \oplus S) \cap (R^\perp \oplus S)^\perp = (R^\perp \oplus S) \cap R \cap S^\perp = 0$. This leads to $L = (R^\perp \oplus S) \oplus (R^\perp \oplus S)^\perp = R^\perp \oplus S \oplus (R \cap S^\perp)$. This leads to $L = (R^\perp \oplus S) \oplus (R^\perp \oplus S)^\perp = R^\perp \oplus S \oplus (R \cap S^\perp)$.
\end{proof}

For any $(L, \varphi)$ quadratic Lie algebra with $R^\perp \subseteq R$ and Levi factor $S$, we introduce the Lie homomorphism:
\begin{alignat*}{2}
	\Phi_S \colon S \to &  & \der_{\hat{\varphi}}(R/R^\perp) & \nonumber                        \\
	x \mapsto           &  & \ \Phi_S(x) \colon R/R^\perp    & \to R/R^\perp                    \\\
	                    &  & y+R^\perp                       & \mapsto [x, y]+R^\perp.\nonumber
\end{alignat*}
Next theorem completes \cite[Theorem~2.2]{Bordemann_1997} by giving the exact isomorphism and bilinear form for achieving isometry.

\begin{theorem}\label{thm:doubleDeconstruction}
	Let $(L, \varphi)$ be a quadratic Lie algebra with no simple ideals and $\hat{\varphi}(x+R^\perp,y+R^\perp)=\varphi(x,y)$. Then $R^\perp \subseteq R$ and $L$ is isomorphic to $((R/R^\perp)_S = S \oplus R/R^\perp \oplus S^*,\hat{\varphi}_S)$ the double extension of $(R/R^\perp, \hat{\varphi})$ by $(S, \Phi_S)$. And it is isometrically isomorphic when considering the bilinear form $\hat{\varphi}_S + \phi$, where $\phi$ is a bilinear symmetric form such that $\phi|_{S\times S} = \varphi|_{S\times S}$ and $\phi|_{(R/R^\perp \oplus S^*) \times (R/R^\perp)_S} = 0$.
\end{theorem}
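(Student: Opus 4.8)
The first assertion, $R^\perp\subseteq R$, is immediate from Proposition~\ref{prop:items}\eqref{it:pd0}: that item gives $R^\perp\subseteq R^\perp\oplus Z(L)=Z(R)\subseteq R$, and it also yields the stronger fact $[R^\perp,R]=0$ that I will use repeatedly. Before constructing any isomorphism I would record the data making the double extension well posed: $R^\perp$ is an ideal of $L$ (the orthogonal of an ideal in a quadratic Lie algebra is again an ideal), so $R/R^\perp$ is a Lie algebra; $\hat{\varphi}$ is well defined because $\varphi(R^\perp,R)=0$, and it is nondegenerate because its radical is $R\cap R^\perp=R^\perp$; and $\Phi_S\colon S\to\der_{\hat{\varphi}}(R/R^\perp)$ is a Lie homomorphism, the derivation property and the $\hat{\varphi}$-skewness following from the Jacobi identity and the invariance of $\varphi$. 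Hence $((R/R^\perp)_S,\hat{\varphi}_S)$ is defined.

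Next I would fix the vector space decomposition. By Proposition~\ref{prop:items}\eqref{it:pd1}, $L=S\oplus M\oplus R^\perp$ with $M:=S^\perp\cap R$, and since $M,R^\perp\subseteq R$ while $S\cap R=0$, this restricts to $R=M\oplus R^\perp$; in particular the projection $R\to R/R^\perp$ maps $M$ isomorphically onto $R/R^\perp$. Using the $S$-module isomorphism $\Omega\colon R^\perp\to S^*$, $\Omega(z)(s)=\varphi(z,s)$, from the proof of Proposition~\ref{prop:items}\eqref{it:pd2m}, I would define the linear bijection
\[
F\colon L\longrightarrow (R/R^\perp)_S,\qquad F(s+m+z)=s+(m+R^\perp)+\Omega(z),
\]
and the plan is to show $F$ is a Lie isomorphism and then that $(\hat{\varphi}_S+\phi)\circ(F\times F)=\varphi$ for the stated $\phi$.

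For the bracket I would expand $[s_1+m_1+z_1,\,s_2+m_2+z_2]$ in $L$ and match its $S$-, $R/R^\perp$- and $S^*$-components against the double extension bracket. The structural facts that make the components line up are: $[S,S]\subseteq S$ (producing the $[b,b']$ term); $[R^\perp,R]=0$, which kills every bracket involving an element of $R^\perp$ apart from $[S,R^\perp]$; $[S,M]\subseteq M$, since $[S,M]\subseteq R$ is orthogonal to $S$ and $R^\perp\cap S^\perp=0$, so that with the definition of $\Phi_S$ one obtains the $\Phi_S(b)(x')-\Phi_S(b')(x)+[x,x']$ part on $R/R^\perp$; and, on $S^*$, the identity $\Omega([s,z])=-\Omega(z)\circ\ad s$ (exactly the statement that $\Omega$ is an $S$-module map) gives the $\beta\circ\ad b'-\beta'\circ\ad b$ terms, while the cocycle $w$ of the double extension is recovered from
\[
\varphi\bigl(\pi_{R^\perp}[m,m'],\,s\bigr)=\varphi([m,m'],s)=\varphi([s,m],m'),
\]
where $\pi_{R^\perp}$ is the projection onto $R^\perp$ along $S\oplus M$ and the first equality holds because the $M$-component of $[m,m']$ lies in $S^\perp$; the right-hand side is precisely $w(m+R^\perp,m'+R^\perp)(s)$. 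I expect this $S^*$-bookkeeping, and in particular making $w$ appear, to be the only genuinely delicate point; everything else is routine once $L=S\oplus M\oplus R^\perp$ and $R^\perp\subseteq Z(R)$ are in hand.

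Finally, for the isometry I would expand $\varphi(s_1+m_1+z_1,\,s_2+m_2+z_2)$; using $\varphi(S,M)=\varphi(R^\perp,R)=0$ it collapses to $\varphi(s_1,s_2)+\Omega(z_1)(s_2)+\Omega(z_2)(s_1)+\hat{\varphi}(m_1+R^\perp,m_2+R^\perp)$. The last three summands are exactly $\hat{\varphi}_S(F(\cdot),F(\cdot))$, whereas $\hat{\varphi}_S$ vanishes on $S\times S$ — which is precisely why $\hat{\varphi}_S$ alone cannot give an isometry — so choosing $\phi$ symmetric with $\phi|_{S\times S}=\varphi|_{S\times S}$ and $\phi$ zero as soon as one argument lies in $R/R^\perp\oplus S^*$ supplies the missing term $\varphi(s_1,s_2)$, yielding $(\hat{\varphi}_S+\phi)(F(\cdot),F(\cdot))=\varphi(\cdot,\cdot)$. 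Since $\varphi$ is nondegenerate and invariant and $F$ is a Lie isomorphism, $\hat{\varphi}_S+\phi$ is automatically nondegenerate and invariant, so $F$ is the claimed isometric isomorphism onto $((R/R^\perp)_S,\hat{\varphi}_S+\phi)$.
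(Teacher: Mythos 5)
Your proof is correct and follows essentially the same route as the paper: you use the decomposition $L = S \oplus (S^\perp\cap R)\oplus R^\perp$ from Proposition~\ref{prop:items}\eqref{it:pd1} and the same map (identity on $S$, quotient projection on $S^\perp\cap R$, and $\Omega$ on $R^\perp$), then check the isometry with $\hat{\varphi}_S+\phi$ exactly as the paper does. The only difference is one of detail, not of method: you spell out the bracket bookkeeping (the $[S,M]\subseteq M$ step, the $S$-module identity $\Omega([s,z])=-\Omega(z)\circ\ad s$, and the recovery of the cocycle $w$), which the paper leaves implicit after its dimension-and-kernel argument for bijectivity.
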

\begin{proof}
	From Proposition \ref{prop:items}, we have $R^\perp \subseteq R$. Denote $E$ as the Lie algebra obtained in that double extension, i.e., $E = S \oplus R/R^\perp \oplus S^*$. Now, by item~\eqref{it:pd1} in Proposition~\ref{prop:items} we have $L = S \oplus (S^\perp \cap R) \oplus R^\perp$.
	This decomposition of $L$ allows us to define $\Psi \colon L \to E$ as follows:
	\begin{equation*}
		\Psi|_S = \id_S, \Psi|_S = \id_S \text{\ and\ }\Psi|_{R^\perp} = \Omega|_{R^\perp},
	\end{equation*}
	where $\Omega$ is a $S$-module homomorphism $\Omega\colon R \to S^*$ (adjoint and coadjoint modules) defined as $x\mapsto \varphi_x$ with $\varphi_x(s)=\varphi(x,s)$ for all $s\in S$. Note that $\Omega|_{R^\perp}$ is analogous to function $\Omega$ introduced in the proof of Proposition~\ref{prop:items} and $\ker\, \Omega = S^\perp \cap R$. This $\Psi$ is a Lie algebra isomorphism as $\dim E=\dim L$ and $\ker \Psi = \{x \in R \cap S^\perp : x \in R^\perp \} = 0$.

	Finally, we can observe that if we take $f = \hat{\varphi}_S + \phi$ as the bilinear form of the double extension $E$, then $f(\Psi(x), \Psi(y)) = \varphi(x,y)$ for every $x,y \in L$, thus it is an isometry.
\end{proof}

Thanks to the previous result and \cite[Theorem~6.2]{Tsou_Walker_1957} we can reduce the construction of generic quadratic Lie algebras to just the semisimple, abelian and solvable-reduced ones separately
\begin{equation*}
	\underbrace{(L,\varphi)}_{\text{Mixed}}=\underbrace{(L_0,\varphi_0)}_{\text{Semisimple ideal}}\perp \underbrace{(\mfa,\varphi_1)}_{\text{Abelian ideal}}\perp \underbrace{(L_1,\varphi_1)}_{\text{Double extension of a solvable}}
\end{equation*}

The semisimple ideal part $L_0$, also named as semisimple socle, is located inside $R^\perp$. The abelian ideal part is just the central ideal outside of $L^2$. Removing these two ideals, we arrive at the reduced quadratic algebra $(L_1, \varphi_1)$ which, if not solvable, is a double extension of the solvable quadratic $(\frac{R_1}{R_1^\perp}, \hat{\varphi_1})$ by a Levi a subalgebra $S_1$ of $L_1$ applying Theorem \ref{thm:doubleDeconstruction}. Figure \ref{fig:chainIdeals} displays a sequence of ideals for the algebra $(L_1, \varphi_1)$ from our preliminary deconstruction (from non-solvable to solvable) that play a significant role in its reconstruction.

Assume next $(L_1, \varphi_1)$ is a solvable-reduced quadratic Lie algebra, so $L_1=R_1$, $R_1^\perp=0$, $\mathcal{J}_1=L_1^2$ and $\mathcal{J}_1^\perp=Z(L_1)\subseteq L_1^2$. We have two types of constructions for $L_1$:
\begin{itemize}
	\item According to \cite[Proposition 2.9]{Favre_Santharoubane_1987}, $L_1$ contains a totally isotropic central element $z\in Z(L_1)$ that allows reconstructing $L_1$ as a one-dimensional double extension of the solvable algebra $\frac{I^\perp}{I}$ where $I=\spa \langle z\rangle$ is a minimal ideal and then $I^\perp$ is maximal of codimension $1$. So, solvable quadratic algebras can be built throughout a chain of double extensions that involved totally isotropic central elements. This elements are located in nilradicals.
	\item Following \cite[Proposition 5.61]{Keith_1984}, $L_1$ is a central bi-extension of the nilpotent algebra $(\frac{N_1}{N_1^\perp}, \hat{\varphi_1})$, where $N_1$ is the nilradical of $L_1$ and $\hat{\varphi_1}$ is the non-degenerate form induced by $\varphi_1$.
\end{itemize}
\begin{figure}
	\centering
	\begin{tikzpicture}[node/.style={circle, fill=black, inner sep=1.5pt, outer sep=2pt}]
		\node[node, label=above:$L_1$]   (L)  at (0.0,0) {};
		\node[node, label=above:$R_1$]   (R)  at (1.5,0) {};
		\node[node, label=above:$N_1$]   (N)  at (3.0,0) {};
		\node[node, label=above:$\mathcal{J}_1$] (R2) at (4.5,0) {};
		\node[node, label=above:$\mathcal{J}_1^\perp$] (R2P) at (6.0,0) {};
		\node[node, label=above:$N_1^\perp$] (NP) at ( 7.5,0) {};
		\node[node, label=above:$R_1^\perp$] (RP) at ( 9.0,0) {};
		\node[node, label=above:$0$]         (O)  at (10.5,0) {};

		\draw (O) -- (RP) -- (NP) -- (R2P) -- (R2) -- (N) -- (R) -- (L);
	\end{tikzpicture}
	\caption{Here $N_1$ is the nilradical of $L_1$, $\mathcal{J}_1=\mathcal{J}(L_1)$ and $\mathcal{J}_1^\perp=R_1^\perp \oplus Z(L)$. For $L_1=R_1$, we have $R_1^\perp=0$.}
	\label{fig:chainIdeals}
\end{figure}
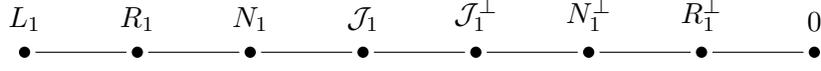

We point out that the discussed deconstructions lead as up to the nilradical. As we will focus on the construction of nonsolvable quadratic Lie algebras, this suggests a procedure for constructing non-solvable algebras starting from quadratic nilpotents.
\begin{enumerate}
	\item Let $(\mfn, \varphi)$ a nilpotent quadratic Lie algebra. (All of them appears by using free nilpotent Lie algebras and their invariant bilinear forms \cite{Benito_2017}.)
	\item Compute the algebra of derivations, $\der \mfn$, and the subalgebra $\der_\varphi \mfn$ of $\varphi$-skew derivations.
	\item Check the existence of subalgebras $S$ of $\der_\varphi \mfn$ different from  those inside the ideal $\inner \mfn$. Look for the semisimple ones. (According to \cite[Proposition 5.1]{FigueroaOFarrill_Stanciu_1996}, one dimensional doble extension through an inner derivation does not produce new algebras. Semisimple subalgebras exist within the Levi subalgebras (see \cite{DeGraaf_2000}).)
	\item Apply the double extension procedure given in Section~\ref{s:definitions} starting from the natural inclusion $\iota\colon S\hookrightarrow \der_\varphi \mfn$, so $\Phi_S=\iota$.
\end{enumerate}

Our following examples highlights all those algorithmic ideas.
\begin{example}
	In case $R=R^\perp$, as $R$ is an ideal, $[R, R^\perp]=0$. It follows that, $R^2 = 0$, making $R$ is abelian. Consequently, a Levi decomposition takes the form $L = S \oplus R = S \oplus R^\perp= S \oplus S^*$. This type of algebras are studied as particular cases of either inflaction constructions proposed in~\cite{Keith_1984} or as $T^*$-extension constructions given in~\cite{Bordemann_1997} and \cite[Corollary~2.5]{Benayadi_Elduque_2014}.
\end{example}

\begin{example}
	For $n\geq 2$, consider the Euclidean even dimensional real vector space $(V, \varphi)$ and $e_1,\dots, e_{2n}$ any orthonormal basis. It is easily checked that the linear map $d(e_{2i-1})=-e_{2i}$ and $d(e_{2i})=-e_{2i-1}$ is a skew-adjoint automorphism. Considering $V$ as an abelian Lie algebra, $(V, \varphi)$ is a quadratic and $d\in \der_\varphi (V)$. So we can consider the double extended quadratic algebra $A_{2n+2}=\mbF\cdot d \oplus (V, \varphi) \oplus \mbF\cdot d^*$ with invariant bilinear form. According to \cite[Theorem 3.2]{Benito_RoldanLopez_2023} the Levi factor of the algebra $\der_\varphi (A_{2n+2})$ is the simple real algebra $\mfsu_n(\mbR)$ and then we get the series of non-solvable and non-semisimple Lie algebras:
	\begin{equation*}
		L_1(n)=\mfsu_n(\mbR) \oplus A_{2n+2} \oplus \mfsu_n(\mbR)^*
	\end{equation*}
	It is easily checked that $R(L_1(n))=A_{2n+2} \oplus \mfsu_n(\mbR)^*$, $R(L_1(n))^\perp= \mfsu_n(\mbR)^*$, $N(L_1(n))=(V, \varphi) \oplus \mbF\cdot d^* \oplus \mfsu_n(\mbR)^*$ and $N(L_1(n))^\perp=\mbF\cdot d^* \oplus \mfsu_n(\mbR)^*$. Moreover, $N(L_1(n))=\mathcal{J}(L_1(n))$. In Figure \ref{fig:deconstruction} we locate all these ideals.
	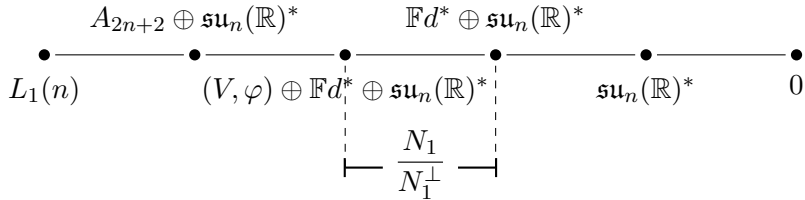
\begin{figure}[h]
	\centering
	\begin{tikzpicture}[node/.style={circle, fill=black, inner sep=1.5pt, outer sep=2pt}]
		\node[node, label=below:$L_1(n)$] (L)  at (0,0) {};
		\node[node, label=above:$A_{2n+2} \oplus \mfsu_n(\mbR)^*$]    (V)  at (2,0) {};
		\node[node, label=below:{$(V,\varphi) \oplus \mbF d^* \oplus \mfsu_n(\mbR)^*$}]    (A)  at (4,0) {};
		\node[node, label=above:$\mbF d^* \oplus \mfsu_n(\mbR)^*$] (FSU) at (6,0) {};
		\node[node, label=below:$\mfsu_n(\mbR)^*$] (SU) at (8,0) {};
		\node[node, label=below:$0$]         (O)  at (10,0) {};

		\draw (O) -- (SU) -- (FSU) -- (A) -- (V) -- (L);
		
		\coordinate (B1S) at (3.98,-1.5) {};
		\coordinate (B1S2) at (4,-1.5) {};
		\coordinate (B1E) at (4.5,-1.5) {};
		\coordinate (B2S) at (5.5,-1.5) {};
		\coordinate (B2E) at (6.02,-1.5) {};
		\coordinate (B2E2) at (6,-1.5) {};
		\node (N) at (5,-1.46) {$\dfrac{N_1}{N_1^\perp}$};
		
		\draw[dashed] (B1S2) -- (A);
		\draw[dashed] (B2E2) -- (FSU);
		\draw[thick, |-] (B1S) -- (B1E);
		\draw[thick, -|] (B2S) -- (B2E);
	\end{tikzpicture}
	\caption{Deconstruction of $L_1(n)$.}
	\label{fig:deconstruction}
\end{figure}
\end{example}

\section{Nilpotent quadratic Lie algebras}\label{s:nilpotent}

To classify nilpotent Lie algebras, we can start from free nilpotent Lie algebras and make quotients of them. According to \cite[Section~1, Propositions~1.4 and 1.5]{Gauger_1973}, any $t$-step nilpotent Lie algebra of type $d$ is isomorphic to $\mfn_{d,t}/I$ for some ideal $I$ such that $\mfn_{d,t}^t \not\subseteq I \subsetneq \mfn_{d,t}^2$. Note this last condition appears in order not to reduce the type or nilpotency index through the quotient.

\subsection{Hall bases}

One classical way of working with bases in free nilpotent Lie algebras is using the so called Hall bases. These bases were introduced in~\cite{Hall_1950}. From the generator set $\{x_1, \dots, x_d\}$, we easily get the standard $s$-monomials $[x_{i_1}, \dots, x_{i_s}]$ and combinations of them that linearly generate the Lie algebra $\mfn_{d,t}$. However, the anticommutativity law ($[x_i, x_j] + [x_j, x_i] = 0$) and the Jacobi identity both establish linear dependency relationships. Starting with a total order $x_d < x_{d-1} < \dots < x_1$, the definition of a Hall basis checks recursively if a given standard monomial depends on the previous ones. To do so, it imposes a monomial order based on the total order mentioned earlier, respecting the degree, where smaller degree means a smaller monomial. Now, a monomial $[u, v]$ belongs to Hall basis if both the following conditions apply:
\begin{itemize}
	\item both $u$ and $v$ are terms of the Hall basis and $u > v$,
	\item if $u = [z,w]$ then $v \geq w$.
\end{itemize}
This can be computationally implemented as seen in the algorithm from Table~\ref{tab:HallBasisAlg}. This algorithm is derived directly from the definition and it is similar to the one found in \cite{DeGraaf_2000}. In particular, we have implemented it for Wolfram Mathematica, and it is available in our GitHub repository mentioned in the introduction. The basis can be obtained using the following functions:
\begin{itemize}
	\item \texttt{HallBasisLevel[d,t]} returns the degree $t$ monomials in the Hall basis with $d$ generators.
	\item \texttt{HallBasisUntilLevel[d,t]} returns the full $\mathcal{H}_{d,t}$, the notation we use to represent the Hall basis of $\mfn_{d,t}$.
\end{itemize}
Both methods have a sibling function that obtains the dimension of each of them, just by adding the word \texttt{Dimension} at the end of their names. In Table~\ref{tab:HallBasisAlg2}, we provide some examples of Hall bases $\mathcal{H}_{d,t}$, for some small $d$ and $t$ values. Additionally, despite their notation describing exactly how the Lie product works, we can obtain the adjoint matrices of that product in the Hall basis computationally with the method \texttt{HallBasisAdjointList[d,t]}.

\begin{table}[h]
	\begin{tabular}{p{0.968\linewidth}}
		\noalign{\smallskip}\hline\noalign{\smallskip}
		{\small
			\textit{isCanonical}($v$):

			\setlength{\parindent}{8pt}
			\textbf{if} $\deg v == 1$ \textbf{ return true};

			\textbf{else if} \textbf{not}(\textit{isCanonical}($v_1$)\textbf{ and }\textit{isCanonical}($v_2$)\textbf{ and }$v_2 \leq v_1$)\textbf{ return false};

			\textbf{else if} $\deg v_1 > 1$\textbf{ return }(\textit{isCanonical}($v_{1,1}$)\textbf{ or }\textit{isCanonical}($v_{1,2}$)\textbf{ or }$v_2 \geq v_{1,2}$);

			\textbf{else return true};
		} \\
		\noalign{\smallskip}\hline\noalign{\smallskip}
	\end{tabular}
	\caption{Hall basis algorithm pseudocode. Note that this is a recursive algorithm. Here $v = [v_1, v_2]$ and $v_1 = [v_{1,1}, v_{1,2}]$. In order to generate Hall Basis elements of degree $n$ we can combine $v_1$ and $v_2$ in level $n-k$ and $k$ respectively, where $k = 1, \ldots, n/2$, and later removed those non-canonical.}
	\label{tab:HallBasisAlg}
\end{table}

\begin{table}[h!]
	{\small
		\tabcolsep=4pt
		\centering
		\begin{tabular}{p{0.77cm}p{10.85cm}}
			\hline\noalign{\smallskip}
			$(d,t)$  & $\mathcal{H}_{d,t}$                                                                                           \\\noalign{\smallskip}\hline\noalign{\smallskip}
			$(2, 6)$ & $x_2, x_1, [x_1, x_2], [[x_1, x_2], x_2], [[x_1, x_2], x_1], [[[x_1, x_2], x_2], x_2],$                       \\
			         & $ [[[x_1, x_2], x_2], x_1], [[[x_1, x_2], x_1], x_1], [[[[x_1, x_2], x_2], x_2], x_2],$                       \\
			         & $ [[[[x_1, x_2], x_2], x_2], x_1], [[[x_1, x_2], x_2], [x_1, x_2]],[[[[x_1, x_2], x_2], x_1], x_1], $         \\
			         & $ [[[x_1, x_2], x_1], [x_1, x_2]], [[[[x_1, x_2], x_1], x_1], x_1],[[[[[x_1, x_2], x_2], x_2], x_2], x_2], $  \\
			         & $ [[[[[x_1, x_2], x_2], x_2], x_2], x_1], [[[[x_1, x_2], x_2], x_2], [x_1, x_2]],$                            \\
			         & $  [[[[[x_1, x_2], x_2], x_2], x_1], x_1],[[[[x_1, x_2], x_2], x_1], [x_1, x_2]], $                           \\
			         & $ [[[[[x_1, x_2], x_2], x_1], x_1], x_1], [[[x_1, x_2], x_1], [[x_1, x_2], x_2]],  $                          \\
			         & $ [[[[x_1, x_2], x_1], x_1], [x_1, x_2]], [[[[[x_1, x_2], x_1], x_1], x_1], x_1] $                            \\
			\noalign{\smallskip}
			$(4, 3)$ & $x_4, x_3, x_2, x_1, [x_3, x_4], [x_2, x_4], [x_2, x_3], [x_1, x_4], [x_1, x_3], [x_1, x_2],$                 \\
			         & $[[x_3, x_4], x_4], [[x_3, x_4], x_3], [[x_3, x_4], x_2], [[x_3, x_4], x_1], [[x_2, x_4], x_4],$              \\
			         & $ [[x_2, x_4], x_3], [[x_2, x_4], x_2], [[x_2, x_4], x_1], [[x_2, x_3], x_3], [[x_2, x_3], x_2], $            \\
			         & $[[x_2, x_3], x_1], [[x_1, x_4], x_4], [[x_1, x_4], x_3],[[x_1, x_4], x_2], [[x_1, x_4], x_1],$               \\
			         & $[[x_1, x_3], x_3], [[x_1, x_3], x_2], [[x_1, x_3], x_1], [[x_1, x_2], x_2], [[x_1, x_2], x_1] $              \\
			\noalign{\smallskip}
			$(6, 2)$ & $x_6, x_5, x_4, x_3, x_2, x_1, [x_5, x_6], [x_4, x_6], [x_4, x_5], [x_3, x_6], [x_3, x_5], [x_3, x_4],$       \\
			         & $[x_2, x_6], [x_2, x_5], [x_2, x_4], [x_2, x_3], [x_1, x_6], [x_1, x_5], [x_1, x_4], [x_1, x_3], [x_1, x_2] $ \\
			\noalign{\smallskip}
			\noalign{\smallskip}\hline\noalign{\smallskip}
		\end{tabular}}
	\caption{Hall basis of $\mfn_{d,t}$. Note from expanded basis $\mathcal{H}_{4,3}$ and $\mathcal{H}_{2,6}$ we can recover Hall basis of $\mfn_{4,2}$ and $\mfn_{2,t}$ for $t=2,3,4,5$}
	\label{tab:HallBasisAlg2}
\end{table}

\subsection{Classification}

In the case of quadratic nilpotent Lie algebras, the non-degeneration of the bilinear form gives us exactly which is the ideal we need to use for the quotient. This result appears in \cite[Proposition~4.1]{Benito_delaConcepcion_Laliena_2017}. The approach consists of finding bilinear invariant (degenerate) forms for free nilpotent Lie algebras, and making the quotient by their respective radicals gives us the desired quadratic nilpotent Lie algebras. In fact, employing automorphisms, we can also determine when two quotients are isometrically isomorphic. Using these quotients, the authors were able to obtain a classification of quadratic nilpotent Lie algebras on $2,3$ generators as seen in the next theorem.

\begin{theorem}\label{thm:clas_comp}
	Up to isomorphism, the indecomposable quadratic nilpotent Lie algebras over any algebraically closed field $\mbF$ of characteristic zero and of type 1 or 2 and nilindex less or equal than 5, or of type 3 and nilindex less or equal than 3 are isomorphic to one of the following Lie algebras:
	\begin{enumerate}[\quad (a)]
		\item 1-dim. abelian Lie algebra $(\mfn_{1}, \varphi)$ with basis $\{a_1\}$ and $\varphi(a_1,a_1)=1$,
		\item 5-dimensional free nilpotent Lie algebra $(\mfn_{2}, \varphi)$ with basis $\{a_i\}_{i=1}^5$ where, for $i\leq j$, we find the non-zero products
		      \begin{align*}
			      [a_2,a_1] = a_3,\qquad
			      [a_3,a_1] = a_4,\qquad
			      [a_3,a_2] = a_5
		      \end{align*}
		      and $\varphi$ is defined as $\varphi(a_i,a_j) = (-1)^{i-1}$ when $i+j=6$ and zero otherwise.

		\item 7-dimensional Lie algebra $(\mfn_3,\varphi)$ with basis $\{a_i\}_{i=1}^7$ where, for $i\leq j$, we find the non-zero products
		      \begin{alignat*}{3}
			      [a_2, a_1] & = a_3,\qquad
			      [a_3, a_1] & = a_4,\qquad
			      [a_4, a_1] & = a_5,\\
			      [a_5, a_1] & = a_6,\qquad
			      [a_5, a_2] & = a_7,\qquad
			      [a_3, a_4] & = a_7,
		      \end{alignat*}
		      and $\varphi$ is defined as $\varphi(a_i,a_j) = (-1)^i$ when $i+j=8$ and zero otherwise.

		\item 8-dimensional Lie algebra $(\mfn_4,\varphi)$ with basis $\{a_i\}_{i=1}^8$ where, for $i\leq j$, we find the non-zero products
		      \begin{alignat*}{3}
			      [a_2 , a_1] & = a_3,\qquad [a_3 , a_1] & = a_4,\qquad [a_3 , a_2] & = a_5, \\
			      [a_4 , a_1] & = a_6,\qquad [a_6 , a_1] & = a_7,\qquad [a_6 , a_2] & = a_8, \\
			      [a_5 , a_2] & = a_6,\qquad [a_3 , a_4] & = a_8,\qquad [a_5 , a_3] & = a_7,
		      \end{alignat*}
		      and $\varphi(a_i , a_j) = (-1)^i$ for $0 \leq i \leq 3$ and $i + j = 9$, $\varphi(a_4, a_4) = \varphi(a_5, a_5) =1$ and $\varphi(a_i, a_j) = 0$ otherwise.

		\item 6-dimensional Lie algebra $(\mfn_5,\varphi)$ with basis $\{a_i\}_{i=1}^6$ where, for $i\leq j$, we find the non-zero products
		      \begin{equation*}
			      [a_2 , a_1] = a_4,\qquad [a_3 , a_1] = a_5,\qquad [a_3 , a_2] = a_6,
		      \end{equation*}
		      and $\varphi$ is defined as $\varphi(a_i,a_j) = (-1)^{i-1}$ when $i+j=7$ and zero otherwise.

		\item 8-dimensional Lie algebra $(\mfn_6,\varphi)$ with basis $\{a_i\}_{i=1}^8$ where, for $i\leq j$, we find the non-zero products
		      \begin{alignat*}{3}
			      [a_2 , a_1] & = a_4,\qquad [a_3 , a_1] & = a_5,\qquad [a_4 , a_1] & = a_6, \\
			      [a_4 , a_2] & = a_7,\qquad [a_5 , a_1] & = a_8,\qquad [a_5 , a_3] & = a_7,
		      \end{alignat*}
		      and $\varphi(a_4 , a_4) = \varphi(a_5, a_5) = \varphi(a_1 , a_7) = -\varphi(a_2, a_6) =-\varphi(a_3 , a_8) = 1$, and $\varphi(a_i, a_j) =0$ otherwise.

		\item 9-dimensional Lie algebra $(\mfn_7,\varphi)$ with basis $\{a_i\}_{i=1}^9$ where, for $i\leq j$, we find the non-zero products
		      \begin{alignat*}{3}
			      [a_2 , a_1] & = a_4,\qquad [a_3 , a_1] & = a_5,\qquad [a_3 , a_2] & = a_6, \\
			      [a_4 , a_1] & = a_7,\qquad [a_4 , a_2] & = a_8,\qquad [a_5 , a_1] & = a_9, \\
			      [a_5 , a_3] & = a_8,\qquad [a_3 , a_6] & = a_7,\qquad [a_6 , a_2] & = a_9,
		      \end{alignat*}
		      and $\varphi(a_4 , a_4 ) = \varphi(a_5 , a_5) = \varphi(a_6 , a_6) = \varphi(a_1 , a_8) = - \varphi(a_2 , a_7) = - \varphi(a_3 , a_9) = 1$, and $\varphi(a_i, a_j) =0$ otherwise.

	\end{enumerate}
	Any non-abelian quadratic Lie algebra of type less or equal than 2 is indecomposable. The non-abelian decomposable Lie algebras of type 3 and nilpotent index less or equal than 3 are the orthogonal sum $(n_{1,1},\varphi) \oplus L$ where $L$ is a quadratic Lie algebra as in item (b), (c), or (d).
\end{theorem}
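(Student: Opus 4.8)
\emph{Proof plan.} The plan is to reduce the classification to an orbit problem for invariant symmetric bilinear forms on a short, explicit list of free nilpotent Lie algebras, and then carry out the resulting finite case analysis.

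\emph{Reduction to free quotients.} By the cited result of Gauger, any nilpotent Lie algebra of type $d$ and nilindex $t$ is $\mfn_{d,t}/I$ for some ideal $I$ with $\mfn_{d,t}^t\not\subseteq I\subsetneq\mfn_{d,t}^2$; if it is quadratic with form $\varphi$, then pulling $\varphi$ back along $\mfn_{d,t}\twoheadrightarrow\mfn_{d,t}/I$ gives an invariant symmetric bilinear form $B$ on $\mfn_{d,t}$ with $\rad B=I$, and conversely every invariant symmetric $B$ on $\mfn_{d,t}$ with $\rad B\subsetneq\mfn_{d,t}^2$ and $\mfn_{d,t}^t\not\subseteq\rad B$ produces such a quadratic quotient; moreover two of these quotients are isometrically isomorphic exactly when the forms lie in one $\mathrm{Aut}(\mfn_{d,t})$-orbit (this is \cite[Proposition~4.1]{Benito_delaConcepcion_Laliena_2017}). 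So it suffices to run over the finitely many pairs $(d,t)\in\{(1,1)\}\cup\{(2,t):1\le t\le5\}\cup\{(3,t):1\le t\le3\}$, classify the $\mathrm{Aut}(\mfn_{d,t})$-orbits of admissible invariant symmetric forms on each $\mfn_{d,t}$, and read off the quotients.

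\emph{Form space and case analysis.} Invariance, the grading of $\mfn_{d,t}$ by degree, and $[\mfn_{d,t}^{\,i},\mfn_{d,t}^{\,j-1}]\subseteq\mfn_{d,t}^{\,i+j-1}$ let one push brackets so that $B$ between homogeneous components of degrees $i$ and $j\ge2$ equals $\pm B$ between the components of degrees $i+j-1$ and $1$; hence $B$ vanishes between components whose degrees add to more than $t+1$ and is determined by its restriction to the generator space and by the induced maps from the higher components into the dual of the generator space, subject to the Jacobi relations inside each component and to overall symmetry. Nondegeneracy then forces the top term $\mfn_{d,t}^t$ to pair faithfully with the generators, which, together with the explicit shape of $\mathrm{Aut}(\mfn_{d,t})$ (generated by the lift of $GL_d$ on the generators and by the transvections $x_i\mapsto x_i+w$, $w\in\mfn_{d,t}^2$), strongly restricts both the admissible $I$ and the number of orbits. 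Over an algebraically closed field of characteristic zero the $GL_d$-part absorbs all scalars, so each admissible orbit has a representative whose quotient is one of the listed algebras: the low pairs give exactly one admissible form after rescaling --- the form $\varphi(a_1,a_1)=1$ of (a) for $\mfn_{1,1}$, the form of (b) for the already-quadratic $\mfn_{2,3}$, and the one-dimensional family of totally antisymmetric $3$-forms giving (e) for $\mfn_{3,2}$ --- while $(2,1),(2,2),(3,1)$ contribute nothing indecomposable (abelian algebras of dimension $\ge2$ are only decomposably quadratic, and the Heisenberg algebra $\mfn_{2,2}$ is not quadratic). The real work is the pairs $(2,4),(2,5),(3,3)$: one enumerates the orbits of admissible forms, checks that $(2,4)$ yields no indecomposable algebra, and for $(2,5)$ and $(3,3)$ matches each quotient with (c), (d), (f) or (g) by an explicit change of basis onto the bracket tables above.

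\emph{Indecomposability and the decomposable list.} Every algebra in (a)--(g) is reduced ($Z(L)\subseteq L^2$ is read off the bracket table), so $Z(L)=(L^2)^\perp\subseteq Z(L)^\perp$ is totally isotropic. If such an $L$ had a proper nondegenerate ideal $I$, then $L=I\oplus I^\perp$ with one summand of dimension $\le\tfrac12\dim L$; a nonzero nilpotent ideal meets $Z(L)$, and a quadratic nilpotent algebra of small dimension is abelian (a $\le2$-dimensional nilpotent algebra always is, and in dimensions $3$ and $4$ the only alternatives --- Heisenberg, or algebras with $\dim Z<\dim L/L^2$ --- are not quadratic), so a small summand would be an abelian ideal, hence contained in the totally isotropic $Z(L)$, hence degenerate --- impossible; the remaining dimension splittings are excluded by bookkeeping with $\dim Z(L)$ and $\dim L^2$. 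Thus (a)--(g) are indecomposable. For the closing assertions, apply the Tsou--Walker decomposition \cite[Theorem~6.2]{Tsou_Walker_1957}: a decomposable non-abelian quadratic nilpotent algebra is an orthogonal sum of indecomposable quadratic pieces whose types add up to its type and are each $\ge1$; if the type is $\le2$ this forces two type-$1$ (hence $1$-dimensional abelian) pieces and so an abelian algebra, a contradiction, whence type $\le2$ non-abelian implies indecomposable; and if the type is $3$ the only option is one piece $\cong(\mfn_1,\varphi)$ orthogonal to a non-abelian quadratic piece of type $2$, which, being of type $2$, is indecomposable and therefore appears among (b), (c), (d).

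\emph{Main difficulty.} The crux is the orbit enumeration for $(2,5)$ and $(3,3)$, where $\mfn_{d,t}$ has dimension $14$ and both the automorphism group and the space of invariant forms are sizeable: one must guarantee \emph{completeness} (no admissible form missed, no two listed algebras secretly isometric) and verify that the normalization over $\overline{\mbF}$ really collapses all parameters onto the finite list. Structuring the computation by the grading and by the forced shape of $\rad B$ keeps everything finite, but this is where essentially all the effort lies; the indecomposability checks, though routine, also lengthen for the $8$- and $9$-dimensional algebras.
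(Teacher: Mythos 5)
Your plan is essentially the approach the paper relies on: the paper does not reprove this theorem but quotes the classification of \cite{Benito_delaConcepcion_Laliena_2017}, whose method is exactly your reduction — invariant symmetric bilinear forms on the free algebras $\mfn_{d,t}$, quotients by their radicals, and $\mathrm{Aut}(\mfn_{d,t})$-orbits to detect isometric isomorphism — as the paper itself summarizes in its section on invariant bilinear forms. Your sketch, including the deferral of the heavy orbit enumeration for $(2,5)$ and $(3,3)$, is consistent with that cited proof, so there is nothing methodologically different to flag.
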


In \cite[Theorem~6.1]{Benito_delaConcepcion_Laliena_2017}, there is also an analogous result over the real field. In that case, the number of algebras up to isometric isomorphisms, with the same restrictions, grows up to 16, from the 7 algebras found in the complex field.

\subsection{Invariant bilinear forms}\label{s:invariant}

The idea behind the previous classification, as stated in the original publication, is to find the radical of bilinear symmetric invariant forms, which gives the ideal for the desired quotient. Again, this can be computationally tackled. Once we have a defined basis and understand how the Lie product works in that basis, for instance corresponding adjoint matrices, the restrictions imposed on some bilinear form to be symmetric and invariant are derived from linear equations. Therefore, for some generic matrix $A$ in that same basis $\{v_1, \dots, v_n\}$, the matrix must satisfy:
\begin{itemize}
	\item $A = A^t$, as the bilinear form $\varphi$ associated must be symmetric.
	\item $B^tA = -AB$ for any matrix $B$ representing some $\ad v_i$. This happens as the invariant rule $\varphi([x,y],z) = -\varphi([y,[x,z])$ is equivalent to $\varphi(\ad_x(y), x) = -\varphi(y, \ad_x(z))$.
\end{itemize}
We can computationally obtain the adjoint matrices associated with the Lie bracket of any free nilpotent Lie algebra $\mfn_{d,t}$, and subsequently, obtain all their invariant symmetric bilinear forms using our method \texttt{GetSymmetricInvariantBilinearForm[var,adjointList]}, in combination with the result from \texttt{HallBasisAdjointList[d,t]}. Note that this method is much more general and can be used over any algebra, not only the free nilpotent ones.

However, in case we know the final algebra $\mfn$ after the quotient, we can still obtain the isomorphic $\mfn_{d,t}/I$ using a simple technique. The first step consists on finding both the type $d = \dim(\mfn/\mfn^2)$ and $t$, which is the nilpotency index, from the algebra $\mfn$. Later, the Hall basis $\mathcal{H}_{d,t}$ lists exactly which products should give us linearly independent terms. Precisely, those non-independent terms form the basis of the ideal $I$. This is also implemented in our library under the name \texttt{GetNilpotentIdeal[adjointList]}, which receives the list of adjoint matrices of some basis of algebra $\mfn$, and returns the whole $\mfn_{d,t}/I$, i.e., $d$, $t$, and $I$.

\section{Derivations}\label{s:derivations}

As we want to obtain non-solvable quadratic Lie algebras, we are going to apply double extensions to those nilpotent Lie algebras. To get the extensions, we first need to obtain their $\varphi$-skew derivations, which are essential ingredients for producing double extensions. Among them, we need to find the ones which are not inner\footnote{An inner derivation is an adjoint map $\ad x$, which is defined as $(\ad x) (y) = [x,y]$.} derivations, as they would produce nilpotent decomposable quadratic Lie algebras according to \cite[Proposition 5.1]{FigueroaOFarrill_Stanciu_1996}. There are two main approaches to tackle this problem: quotient of derivations or direct computation.

\subsection{Derivations in the quotient}
All algebras in the classification from Theorem~\ref{thm:clas_comp} can be obtained, as mentioned previously as quotients of free nilpotent Lie algebras. As explained there, those ideals for the quotient can be easily obtained, even computationally. Taking this into account, algebras in the classification can be obtained as
\begin{gather*}
	\mfn_1 \cong \mfn_{1,1}\qquad
	\mfn_2 \cong \mfn_{2,3}\qquad
	\mfn_3 \cong \mfn_{2,5}/I_1\qquad
	\mfn_4 \cong \mfn_{2,5}/I_2\\[0.1cm]
	\mfn_5 \cong \mfn_{3,2}\qquad
	\mfn_6 \cong \mfn_{3,3}/I_3\qquad
	\mfn_7 \cong \mfn_{3,3}/I_4
\end{gather*}
where
\begin{align*}
	I_1 & = \spa\langle
	[[x_1, x_2], x_1], 
	[[x_1, x_2], x_2], x_1], 
	[[x_1, x_2], x_1], x_1],\\
	    & \enspace\quad[[[x_1, x_2], x_2], x_2], x_1] + [[x_1, x_2], x_2], [x_1, x_2]], \\ 
	    & \enspace\quad[[[x_1, x_2], x_2], x_1], x_1],
	[[x_1, x_2], x_1], [x_1, x_2]],
	[[[x_1, x_2], x_1], x_1], x_1]
	\rangle,\\[0.2cm]
	I_2 & = \spa\langle
	[[[x_1, x_2], x_2], x_1], [[[[x_1, x_2], x_2], x_1], x_1],\\&\enspace\quad
	[[[x_1, x_2], x_1], x_1] - [[[x_1, x_2], x_2], x_2],\\&\enspace\quad
	[[[[x_1, x_2], x_2], x_2], x_1] + [[[x_1, x_2], x_2], [x_1, x_2]],\\&\enspace\quad
	[[[x_1, x_2], x_1], [x_1, x_2]] - [[[[x_1, x_2], x_2], x_2], x_2],\\&\enspace\quad
	[[[[x_1, x_2], x_1], x_1], x_1] - [[[[x_1, x_2], x_2], x_2], x_1]\rangle,\\[0.2cm]
	I_3 & =\spa\langle
	[x_1,x_2],
	[[x_2,x_3],x_1],
	[[x_1,x_3],x_2],
	[[x_1,x_2],x_2],\\&\enspace\quad
	[[x_1,x_2],x_1],
	[[x_2,x_3],x_2] - [[x_1,x_3],x_1]\rangle,\\[0.2cm]
	I_4 & = \spa\langle
	[[x_2,x_3],x_1],
	[[x_1,x_3],x_2],
	[[x_2,x_3],x_2] - [[x_1,x_3],x_1],\\&\enspace\quad
	[[x_1,x_3],x_3] - [[x_1,x_2],x_2],
	[[x_2,x_3],x_3] + [[x_1,x_2],x_1]\rangle.
\end{align*}
\begin{remark}
	Beyond our restrictions to the type and nilpotency index, the only free nilpotent quadratic Lie algebras are $\mfn_{d,1}$ (the abelian family), $\mfn_{2,3}$ and $\mfn_{3,2}$.
\end{remark}
This relationship with the free nilpotent Lie algebras is a good starting point for finding their derivations, or $\varphi$-skew derivations. This is a consequence of the following proposition from \cite[Proposition~5]{Sato_1971}. For any ideal $I$ of $\mfn_{d,t}$ such that $\mfn_{d,t}^t \not \subseteq I \subsetneq \mfn_{d,t}^2$, let $\der_{I} {\mfn_{d,t}}$ and $\der_{\mfn_{d,t}, I} \mfn_{d,t}$ denote the subset of derivations which map $I$ into itself and $\mfn_{d,t}$ into $I$, respectively. Both sets are subalgebras of $\der \mfn_{d,t}$, even furthermore, $\der_{\mfn_{d,t},I} \mfn_{d,t}$ is an ideal inside $\der_{I} \mfn_{d,t}$. The following result holds:

\begin{theorem}\label{thm:quotientDerivations}
	Let $I$ be an ideal of $\mfn_{d,t}$ such that $\mfn_{d,t}^t \not \subseteq I \subsetneq \mfn_{d,t}^2$. The algebra of derivations of $\dfrac{\mfn_{d,t}}{I}$ is isomorphic to $\dfrac{\der_{I} \mfn_{d,t}}{\der_{\mfn_{d,t}, I} \mfn_{d,t}}$, where $\der_{I} \mfn_{d,t}$ and $\der_{\mfn_{d,t}, I} {\mfn_{d,t}}$ map $I$ and $\mfn_{d,t}$ into $I$, respectively.
\end{theorem}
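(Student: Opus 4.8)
The plan is to exhibit the isomorphism explicitly as the natural map induced by passing to the quotient, and then to identify its kernel with $\der_{\mfn_{d,t},I}\mfn_{d,t}$. First I would set $\mfn := \mfn_{d,t}$ and $\pi\colon\mfn\to\mfn/I$ the canonical projection. Given $D\in\der_I\mfn$, since $D(I)\subseteq I$ the rule $\bar D(x+I):=D(x)+I$ is a well-defined linear endomorphism of $\mfn/I$, and one checks directly from the Leibniz rule for $D$ that $\bar D$ is a derivation of $\mfn/I$; this defines a map $\Theta\colon\der_I\mfn\to\der(\mfn/I)$, $D\mapsto\bar D$, which is visibly $\mbF$-linear, and a short computation (both $\overline{D_1D_2}$ and $\overline{D_1}\,\overline{D_2}$ send $x+I$ to $D_1D_2(x)+I$) shows it respects the bracket of $\der$, so $\Theta$ is a homomorphism of Lie algebras.

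Next I would compute $\ker\Theta$: $D\in\ker\Theta$ iff $D(x)+I=I$ for all $x\in\mfn$, i.e.\ $D(\mfn)\subseteq I$, which is exactly the defining condition of $\der_{\mfn,I}\mfn$. Hence $\ker\Theta=\der_{\mfn,I}\mfn$, and it only remains to prove $\Theta$ is surjective. This is the step I expect to be the main obstacle, and it is where freeness of $\mfn_{d,t}$ is essential. Given $\delta\in\der(\mfn/I)$, pick generators $x_1,\dots,x_d$ of $\mfn$ and lift each $\delta(x_i+I)$ to an arbitrary element $y_i\in\mfn$ with $y_i+I=\delta(x_i+I)$. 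Because $\mfn_{d,t}$ is free nilpotent on $x_1,\dots,x_d$, the assignment $x_i\mapsto y_i$ extends uniquely to a derivation $D$ of $\mfn$ (a derivation of a free nilpotent Lie algebra is determined by, and may be prescribed arbitrarily on, a free generating set — this is the universal property of $\mfn_{d,t}=\mfFL(d)/\mfFL(d)^{t+1}$ together with the fact that derivations send $\mfn^{t+1}=0$ to $0$). Then $\bar D$ and $\delta$ agree on the generators $x_i+I$ of $\mfn/I$, hence on all of $\mfn/I$ since both are derivations; in particular $\bar D=\delta$. To conclude that $D\in\der_I\mfn$, note $\bar D=\delta$ is well defined on $\mfn/I$, which forces $D(I)\subseteq I$, so indeed $D\in\der_I\mfn$ and $\Theta(D)=\delta$.

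Finally I would invoke the first isomorphism theorem for Lie algebras: $\Theta$ is a surjective homomorphism with kernel $\der_{\mfn,I}\mfn$, so it descends to an isomorphism
\begin{equation*}
	\frac{\der_I\mfn_{d,t}}{\der_{\mfn_{d,t},I}\mfn_{d,t}}\;\xrightarrow{\ \sim\ }\;\der\!\left(\frac{\mfn_{d,t}}{I}\right),
\end{equation*}
which is the claimed statement. (The fact that $\der_{\mfn,I}\mfn$ is an ideal of $\der_I\mfn$, already noted in the text before the theorem, is also recovered for free as the kernel of a homomorphism.) The only genuinely delicate point is the lifting argument for surjectivity; everything else is a routine check that the induced map is a well-defined Lie algebra homomorphism with the stated kernel.
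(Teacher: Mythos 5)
Your overall strategy is the natural one and, for the record, the paper itself does not prove this theorem (it is quoted from Sato's Proposition~5), so there is no internal proof to compare against: you build the induced map $\Theta\colon\der_I\mfn_{d,t}\to\der(\mfn_{d,t}/I)$, check it is a Lie algebra homomorphism, identify $\ker\Theta=\der_{\mfn_{d,t},I}\mfn_{d,t}$, prove surjectivity by lifting on a free generating set, and finish with the first isomorphism theorem. The well-definedness, homomorphism and kernel computations are correct, and so is the appeal to the universal property of $\mfn_{d,t}=\mfFL(d)/\mfFL(d)^{t+1}$ to extend an arbitrary assignment $x_i\mapsto y_i$ to a derivation $D$ (a derivation of $\mfFL(d)$ preserves each term of the lower central series, hence descends).

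There is, however, a genuine circularity in your last step of the surjectivity argument. You first assert that ``$\bar D$ and $\delta$ agree on the generators, hence $\bar D=\delta$'', and only afterwards deduce $D(I)\subseteq I$ from the well-definedness of $\bar D$; but $\bar D$ is not a map at all until $D(I)\subseteq I$ has been established, which is precisely what you are trying to prove, so $\bar D$ cannot be invoked at that point. The repair is standard and short: compare $\pi\circ D$ and $\delta\circ\pi$ as linear maps $\mfn_{d,t}\to\mfn_{d,t}/I$. Both satisfy the Leibniz identity along $\pi$, namely $f([x,y])=[f(x),\pi(y)]+[\pi(x),f(y)]$, so their difference vanishes on the generators $x_1,\dots,x_d$ and therefore, by induction on the length of bracket monomials (which span $\mfn_{d,t}$), on all of $\mfn_{d,t}$; thus $\pi\circ D=\delta\circ\pi$. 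Then for $u\in I$ one gets $\pi(D(u))=\delta(\pi(u))=0$, i.e.\ $D(I)\subseteq I$, so $D\in\der_I\mfn_{d,t}$, and finally $\bar D\circ\pi=\pi\circ D=\delta\circ\pi$ together with surjectivity of $\pi$ gives $\bar D=\delta$. With this reordering the proof is complete; note it nowhere uses the hypotheses $\mfn_{d,t}^t\not\subseteq I\subsetneq\mfn_{d,t}^2$, which serve only to preserve the type and nilindex of the quotient.
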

As all elements in the Hall basis of a free nilpotent Lie algebra are formed by products of the generators, its derivations can be obtained by applying the derivation rule, i.e., $D([x,y]) = [D(x),y] + [x,D(y)]$. These derivations can be easily obtained with the help of a computer for our family of algebras. Thanks to the previous theorem, we can compute what the derivations in the quotient are. A useful computational approach for finding those derivations of $\mfn_{d,t}/I$ is explained in the next steps:

\begin{enumerate}
	\item Find generic derivations $\delta_1$ and $\delta_2$ in $\mfn_{d,t}$.
	\item Change the basis in that derivation to a basis formed by the union of a basis of a complement of $I$ and a basis of $I$. This splits the derivation into 4 submatrices considering projections.
	\item Impose $\delta_1(I) \subseteq I$ and $\delta_2(\mfn_{d,t}) \subseteq I$, which refers to the upper right and upper half submatrices, respectively.
	\item After seeing both derivations, we apply the quotient (equivalence class).
\end{enumerate}

In order to compute all those derivations, we have developed the following methods in our library:
\begin{itemize}
	\item \texttt{GetDerivation[var, adjointList]}
	\item \texttt{GetInnerDerivation[var, adjointList]}
	\item \texttt{GetSkewDerivation[var,adjointList,B]}
\end{itemize}
All methods receive a variable to use, and a list of adjoint matrices encoding the structure constants of the desired algebra. The last method of all also asks for the matrix of the bilinear form in order to compute the $\varphi$-skew derivations.

\begin{example}\label{ex:tec1}
	In Figure~\ref{fig:mat1}, we can see a Mathematica code in action to find $\varphi$-skew derivations of $\mfn_{2,5}/I_1$ using Theorem~\ref{thm:quotientDerivations}.
	\begin{figure}[h!]
		\centering
		\includegraphics{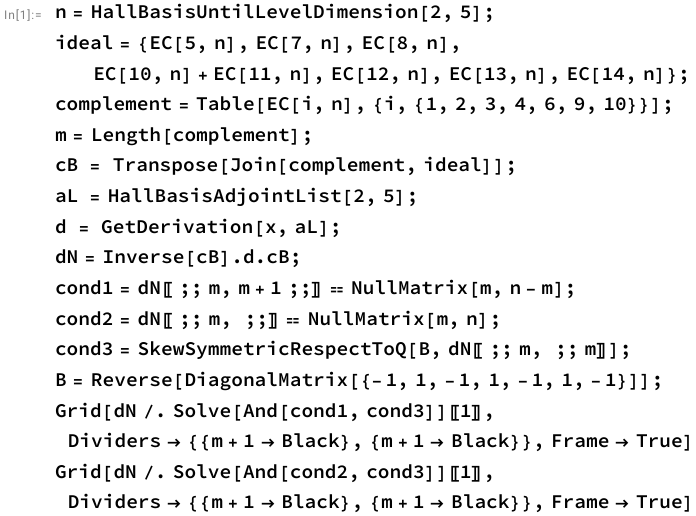}
		\caption{$\varphi$-skew derivations of $\mfn_{2,5}/I_1$ computed in Wolfram Mathematica using our library.}
		\label{fig:mat1}
	\end{figure}
\end{example}

Despite this approach is interesting from an algebraic point of view, it might not be the most efficient computationally for our purpose.

\subsection{Direct computation of derivations}

A more efficient approach involves directly computing derivations after obtaining the quotient algebra. We can accomplish this by using the same idea employed for finding derivations before the quotient. As we know how the structure constants (adjoint matrices) are after the quotient, we can directly compute its derivations without using any theorem. To aid in this computation, we have included another method to find the product in the quotient: \texttt{AdjointListQuotient[cB,iB,adjointList]}. Here, \texttt{cB} denotes a basis of $C$ with elements $\{c_1,\dots,c_n\}$, and \texttt{iB} is a basis of an ideal $I$, both expressed by their coordinates, such that $L = C \oplus I$. This method returns the list of adjoints of $L/I$ in the basis $\{c_1+I,\dots,c_n+I\}$.

\begin{example}\label{ex:tec2}
	In Figure~\ref{fig:mat2} we can see a Mathematica code in action to find $\varphi$-skew derivations of $\mfn_{2,5}/I_1$ finding first the product in the quotient algebra.
	\begin{figure}[h!]
		\centering
		\includegraphics{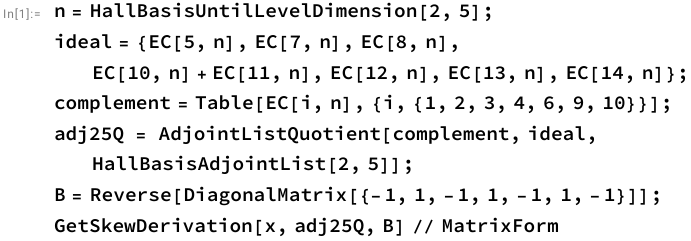}
		\caption{$\varphi$-skew derivations of $\mfn_{2,5}/I_1$ computed in Wolfram Mathematica using our library.}
		\label{fig:mat2}
	\end{figure}
\end{example}

\section{Computations and constructions}\label{s:computations}

Previous techniques serve us to compute all $\varphi$-skew derivations for the algebras in the classification given in Theorem~\ref{thm:clas_comp}. Next we are going to give this list, including the derivations calculated in Examples~\ref{ex:tec1} and \ref{ex:tec2}. Note each derivation is going to be expressed via its coordinate matrix in the defined basis using some variables $m_i$. Also, we are including some vertical and horizontal lines to help us distinguish the terms $\mfn^k$ of the lower central series and their quotiens $\mfn^k/\mfn^{k+1}$.

\begin{itemize}
	\item In $(\mfn_1, \varphi)$ all $\varphi$-skew derivations are null.
	\item In $(\mfn_2, \varphi)$ all $\varphi$-skew derivations are of the form
	      \begin{equation*}
		      \left(
		      \begin{array}{cc|c|cc}
			      m_1 & m_2  & 0    & 0   & 0    \\
			      m_3 & -m_1 & 0    & 0   & 0    \\
			      \hline
			      m_4 & m_5  & 0    & 0   & 0    \\
			      \hline
			      m_6 & 0    & m_5  & m_1 & m_2  \\
			      0   & m_6  & -m_4 & m_3 & -m_1 \\
		      \end{array}
		      \right)
	      \end{equation*}
	      Among them, inner derivations are the ones where $m_1 = m_2 = m_3 = 0$.
	\item In $(\mfn_3, \varphi)$ all $\varphi$-skew derivations are of the form
	      \begin{equation*}
		      \left(
		      \begin{array}{cc|c|c|c|cc}
			      m_1 & 0      & 0    & 0   & 0    & 0     & 0    \\
			      m_2 & -2 m_1 & 0    & 0   & 0    & 0     & 0    \\
			      \hline
			      m_3 & m_4    & -m_1 & 0   & 0    & 0     & 0    \\
			      \hline
			      m_5 & 0      & m_4  & 0   & 0    & 0     & 0    \\
			      \hline
			      m_6 & m_7    & 0    & m_4 & m_1  & 0     & 0    \\
			      \hline
			      m_8 & 0      & m_7  & 0   & m_4  & 2 m_1 & 0    \\
			      0   & m_8    & -m_6 & m_5 & -m_3 & m_2   & -m_1 \\
		      \end{array}
		      \right)
	      \end{equation*}
	      Among them, inner derivations are the ones where $m_1 = m_2 = m_7 = 0$.
	\item In $(\mfn_4, \varphi)$ all $\varphi$-skew derivations are of the form
	      \begin{equation*}
		      \left(
		      \begin{array}{cc|c|cc|c|cc}
			      0    & m_1 & 0    & 0    & 0    & 0    & 0    & 0   \\
			      -m_1 & 0   & 0    & 0    & 0    & 0    & 0    & 0   \\
			      \hline
			      m_2  & m_3 & 0    & 0    & 0    & 0    & 0    & 0   \\
			      \hline
			      m_4  & m_5 & m_3  & 0    & m_1  & 0    & 0    & 0   \\
			      m_5  & m_6 & -m_2 & -m_1 & 0    & 0    & 0    & 0   \\
			      \hline
			      m_7  & m_8 & 0    & m_3  & -m_2 & 0    & 0    & 0   \\
			      \hline
			      m_9  & 0   & m_8  & -m_5 & -m_6 & m_3  & 0    & m_1 \\
			      0    & m_9 & -m_7 & m_4  & m_5  & -m_2 & -m_1 & 0   \\
		      \end{array}
		      \right)
	      \end{equation*}
	      Among them, inner derivations are the ones where $m_1 = m_5 = 0$ and $m_4 = m_6$.
	\item In $(\mfn_5, \varphi)$ all $\varphi$-skew derivations are of the form
	      \begin{equation*}
		      \left(
		      \begin{array}{ccc|ccc}
			      m_1    & m_2    & m_3      & 0       & 0    & 0    \\
			      m_4    & m_5    & m_6      & 0       & 0    & 0    \\
			      m_7    & m_8    & -m_1-m_5 & 0       & 0    & 0    \\
			      \hline
			      m_9    & m_{10} & 0        & m_1+m_5 & m_6  & -m_3 \\
			      m_{11} & 0      & m_{10}   & m_8     & -m_5 & m_2  \\
			      0      & m_{11} & -m_9     & -m_7    & m_4  & -m_1 \\
		      \end{array}
		      \right)
	      \end{equation*}
	      Among them, inner derivations are the ones where $m_i = 0$ for $i \leq 8$.
	\item In $(\mfn_6, \varphi)$ all $\varphi$-skew derivations are of the form
	      \begin{equation*}
		      \left(
		      \begin{array}{ccc|cc|ccc}
			      m_1    & 0       & 0      & 0    & 0    & 0    & 0    & 0   \\
			      m_2    & -m_1    & m_3    & 0    & 0    & 0    & 0    & 0   \\
			      m_4    & -m_3    & -m_1   & 0    & 0    & 0    & 0    & 0   \\
			      \hline
			      m_5    & m_6     & m_7    & 0    & m_3  & 0    & 0    & 0   \\
			      m_8    & m_7     & m_9    & -m_3 & 0    & 0    & 0    & 0   \\
			      \hline
			      m_{10} & 0       & m_{11} & m_6  & m_7  & m_1  & 0    & m_3 \\
			      0      & m_{10}  & m_{12} & -m_5 & -m_8 & m_2  & -m_1 & m_4 \\
			      m_{12} & -m_{11} & 0      & m_7  & m_9  & -m_3 & 0    & m_1 \\
		      \end{array}
		      \right)
	      \end{equation*}
	      Among them, inner derivations are the ones where $m_i = 0$ for $i\leq 4$, also $m_7=m_{11} = 0$ and $m_6 = m_9$.
	\item In $(\mfn_7, \varphi)$ all $\varphi$-skew derivations are of the form
	      \begin{equation*}
		      \arraycolsep=2.3pt
		      \left(
		      \begin{array}{ccc|ccc|ccc}
			      0       & m_1    & m_2     & 0    & 0    & 0       & 0    & 0   & 0    \\
			      -m_1    & 0      & m_3     & 0    & 0    & 0       & 0    & 0   & 0    \\
			      -m_2    & -m_3   & 0       & 0    & 0    & 0       & 0    & 0   & 0    \\
			      \hline
			      m_4     & m_5    & m_6     & 0    & m_3  & -m_2    & 0    & 0   & 0    \\
			      m_7     & m_8    & m_9     & -m_3 & 0    & m_1     & 0    & 0   & 0    \\
			      m_8-m_6 & m_{10} & m_{11}  & m_2  & -m_1 & 0       & 0    & 0   & 0    \\
			      \hline
			      m_{12}  & 0      & -m_{14} & m_5  & m_8  & m_{10}  & 0    & m_1 & m_3  \\
			      0       & m_{12} & m_{13}  & -m_4 & -m_7 & m_6-m_8 & -m_1 & 0   & -m_2 \\
			      m_{13}  & m_{14} & 0       & m_6  & m_9  & m_{11}  & -m_3 & m_2 & 0    \\
		      \end{array}
		      \right)
	      \end{equation*}
	      Among them, inner derivations are the ones where $m_4 = m_{11}$, $m_5 = m_9$, $m_7 = m_{10}$ and $m_1 = m_2 = m_3 = m_6 = m_8 = 0$.
\end{itemize}

\begin{remark}
	Note the first block column of each derivation, which corresponds to the value of the derivation of the generators, defines uniquely the entire derivation.
\end{remark}

Once we have studied the derivations of each of our nilpotent Lie algebras, we are ready to find some simple subalgebras in their algebra of derivations. This would allow us, via double extensions, to obtain non-solvable and non-semisimple quadratic Lie algebras with no simple ideals\footnote{In case there is some simple ideal the algebra can be decomposed.}.
In light of the results, observing their upper left corners, just a few of them are valid for this purpose. All $\der \mfn_1$, $\der \mfn_3$, $\der \mfn_4$ and $\der \mfn_6$ solvable. Therefore, only $\mfn_2$, $\mfn_5$ and $\mfn_7$ can be double extended to produce non-solvable quadratic Lie algebras. The Lie algebras we can obtain in this cases are:
\begin{itemize}
	\item For $\mfn_2$, which is isomorphic to $\mfn_{2,3}$, there is a subalgebra of derivations isomorphic to $\mfsl_2(\mbF)$, just considering $\varphi$-skew derivations where $m_i = 0$ for $i\geq 4$. This subalgebra is just a Levi factor of $\der_\varphi \mfn_{2,3}$ which is isomorphic to $\mfsl_2(\mbF)$. Using the double extensions method we obtain the algebra $\mfsl_2(\mbF) \oplus \mfn_{2,3} \oplus \mfsl_2(\mbF)^*$.
	\item For $\mfn_5$, which is isomorphic to $\mfn_{3,2}$, there is a subalgebra of derivations isomorphic to $\mfsl_3(\mbF)$, just considering $\varphi$-skew derivations where $m_i = 0$ for $i\geq 9$. This subalgebra is just a Levi factor of $\der_\varphi \mfn_{3,2}$ an it is isomorphic to $\mfsl_3(\mbF)$. Using the double extension procedure we get the algebra $\mfsl_3(\mbF) \oplus \mfn_{3,2} \oplus \mfsl_3(\mbF)^*$. Also, we can easily produce $\mfsl_2(\mbF) \oplus \mfn_{3,2} \oplus \mfsl_2(\mbF)^*$.
	\item For $\mfn_7$, which is isomorphic to $\mfn_{3,3}/I$ with
	      \begin{multline*}
		      I = \spa\langle
		      [[x_2,x_3],x_1],
		      [[x_1,x_3],x_2],
		      [[x_2,x_3],x_2] - [[x_1,x_3],x_1], \\
		      [[x_1,x_3],x_3] - [[x_1,x_2],x_2],
		      [[x_2,x_3],x_3] + [[x_1,x_2],x_1]
		      \rangle,
	      \end{multline*}
	      there is a subalgebra of derivations isomorphic to $\mfsl_2(\mbF)$, obtained by considering $\varphi$-skew derivations where $m_i = 0$ for $i\geq 4$. This subalgebra is just a Levi factor of $\der_\varphi \mfn_7$ an it is isomorphic to $\mfso_3(\mbF)\cong \mfsl_2(\mbF)$. Using the double extension construction, we obtain algebras isomorphic to $\mfsl_2(\mbF) \oplus (\mfn_{3,3}/I) \oplus \mfsl_2(\mbF)^*$.
\end{itemize}
All these Levi factors can also be obtained via algorithms. However, as in this case, they can be observed directly, we have not implemented them. Nevertheless, in~\cite{DeGraaf_2000} we can find a wide range of algorithms in pseudocode, including not only the Levi factor one, but also some algorithms mentioned in this paper (derivations, quotient algebras, Hall basis...) or even other ones that can be useful to compute the nilradical or radical of some Lie algebras.

\section{Conclusions}

This paper presents a method to obtain non-solvable and non-semisimple quadratic Lie algebras whose largest solvable radical is nilpotent. The process begins with the deconstruction of these non-solvable and non-semisimple quadratic Lie algebras. All these algebras can be obtained via double extensions from their solvable radical, provided they do not have simple ideals that would lead to reducible quadratic Lie algebras. To study our specific case, we use as support a classification of some small nilpotent Lie algebras using their isomorphisms to quotients of free nilpotent Lie algebras. After that, we are ready to find their $\varphi$-skew derivations to determine all their possible double extensions. These derivations can be obtained in two different ways: finding how derivations in a quotient work, or finding the new Lie product and computing the derivations directly.

Although the techniques used in this article may seem particular, most of them can be extended. For instance, all techniques can be applied for larger nilpotent quadratic Lie algebras. And even more, most can be used for general solvable quadratic Lie algebras. Moreover, throughout this study, we have found and completed some general results and properties of these algebras. For example, we have precisely identify the relationship between non-solvable quadratic Lie algebras with no simple ideals and the double extension producing them from their radical.

Finally, but not less important, most of the procedures described in this paper have been implemented computationally. In fact, we have released a Wolfram Mathematica Library with all these functions and more, ready to be utilized. This tool is highly beneficial for obtaining examples, understanding how it works, and getting conjectures of what is happening in order to develop new results after. This tool can be complemented with other Lie algebras packages as some found in GAP4, LiE or Magma. Although some algorithms have been defined before, we have implemented them over a different language and we have developed new methods such as \verb|GetNilpotentIdeal| or \verb|SkewDerivations| which are a key part of our work. In addition, all those methods are compatible among themselves and they can be nested to study complex situations.

\section*{Acknowledgements}

This research has been partially funded by grant Fortalece 2023/03 of ``Comunidad Aut\'onoma de La Rioja'' and by grant MTM2017-83506-C2-1-P of ``Ministerio de Econom\'ia, Industria y Competitividad, Gobierno de Espa\~na'' (Spain) from 2017 to 2022 and by grant PID2021-123461NB-C21, funded by MCIN/AEI/10.13039/501100011033 and by ``ERDF: A way of making Europe'' starting from 2023. The third author has been also supported until 2022 by a predoctoral research contract FPI-2018 of ``Universidad de La Rioja''.

\bibliographystyle{apalike}
\bibliography{bibliography.bib}

\begin{thebibliography}{}

\bibitem[Bajo and Benayadi, 1997]{Bajo_Benayadi_1997}
Bajo, I. and Benayadi, S. (1997).
\newblock {L}ie algebras admitting a unique quadratic structure.
\newblock {\em Communications in Algebra}, 25(9):2795--2805.

\bibitem[Baum and Kath, 2003]{Baum_Kath_2003}
Baum, H. and Kath, I. (2003).
\newblock Doubly extended {L}ie groups--curvature, holonomy and parallel
  spinors.
\newblock {\em Differential Geometry and its Applications}, 19(3):253--280.

\bibitem[Benayadi, 2003]{Benayadi_2003}
Benayadi, S. (2003).
\newblock Socle and some invariants of quadratic {L}ie superalgebras.
\newblock {\em Journal of Algebra}, 261(2):245--291.

\bibitem[Benayadi and Elduque, 2014]{Benayadi_Elduque_2014}
Benayadi, S. and Elduque, A. (2014).
\newblock Classification of quadratic {L}ie algebras of low dimension.
\newblock {\em Journal of Mathematical Physics}, 55(8):081703--01--081703--17.

\bibitem[Benito, 2017]{Benito_2017}
Benito, P. (2017).
\newblock {\em {L}ie Algebras}.
\newblock Personal notes from the author, Cape Town, South Africa, cimpa school
  edition.

\bibitem[Benito and {de-la-Concepci{\'o}n}, 2013]{Benito_delaConcepcion_2013}
Benito, P. and {de-la-Concepci{\'o}n}, D. (2013).
\newblock On {L}evi extensions of nilpotent {L}ie algebras.
\newblock {\em Linear Algebra and its Applications}, 439(5):1441--1457.

\bibitem[Benito et~al., 2017]{Benito_delaConcepcion_Laliena_2017}
Benito, P., {de-la-Concepci{\'o}n}, D., and Laliena, J. (2017).
\newblock Free nilpotent and nilpotent quadratic {L}ie algebras.
\newblock {\em Linear Algebra and its Applications}, 519:296--326.

\bibitem[Benito and Rold{\'a}n-L{\'o}pez, 2023]{Benito_RoldanLopez_2023}
Benito, P. and Rold{\'a}n-L{\'o}pez, J. (2023).
\newblock Metrics related to oscillator algebras.
\newblock {\em Linear and Multilinear Algebra}, pages 1--15.

\bibitem[Bordemann, 1997]{Bordemann_1997}
Bordemann, M. (1997).
\newblock Nondegenerate invariant bilinear forms on nonassociative algebras.
\newblock {\em Acta Mathematica Universitatis Comenianae. New Series},
  66(2):151--201.

\bibitem[De~Graaf, 2000]{DeGraaf_2000}
De~Graaf, W.~A. (2000).
\newblock {\em {L}ie algebras: theory and algorithms}.
\newblock North-Holland mathematical library. Elsevier, 1st ed edition.

\bibitem[Favre and Santharoubane, 1987]{Favre_Santharoubane_1987}
Favre, G. and Santharoubane, L.~J. (1987).
\newblock Symmetric, invariant, nondegenerate bilinear form on a {L}ie algebra.
\newblock {\em Journal of Algebra}, 105(2):451--464.

\bibitem[Figueroa-O'Farrill and Stanciu, 1996]{FigueroaOFarrill_Stanciu_1996}
Figueroa-O'Farrill, J.~M. and Stanciu, S. (1996).
\newblock On the structure of symmetric self-dual {L}ie algebras.
\newblock {\em Journal of Mathematical Physics}, 37(8):4121--4134.

\bibitem[Gauger, 1973]{Gauger_1973}
Gauger, M.~A. (1973).
\newblock On the classification of metabelian {L}ie algebras.
\newblock {\em Transactions of the American Mathematical Society},
  179(0):293--329.

\bibitem[Hall, 1950]{Hall_1950}
Hall, Marshall, J. (1950).
\newblock A basis for free {L}ie rings and higher commutators in free groups.
\newblock {\em Proceedings of the American Mathematical Society}, 1:575--581.

\bibitem[Hilgert et~al., 1989]{Hilgert_Hofmann_Lawson_1989}
Hilgert, J., Hofmann, K.~H., and Lawson, J.~D. (1989).
\newblock {\em Lie Groups, Convex Cones, and Semigroups}.
\newblock Clarendon Press.

\bibitem[Hofmann and Keith, 1986]{Hofmann_Keith_1986}
Hofmann, K.~H. and Keith, V.~S. (1986).
\newblock Invariant quadratic forms on finite dimensional {L}ie algebras.
\newblock {\em Bulletin of the Australian Mathematical Society}, 33(1):21--36.

\bibitem[Jacobson, 1979]{Jacobson_1979}
Jacobson, N. (1979).
\newblock {\em {L}ie Algebras}.
\newblock Dover Books on Advanced Mathematics. Dover Publications, 1st edition
  by this publisher, corrected printing edition.

\bibitem[Keith, 1984]{Keith_1984}
Keith, V.~S. (1984).
\newblock {\em On invariant bilinear forms on finite-dimensional {L}ie
  algebras}.
\newblock PhD thesis, Tulane University.

\bibitem[Marshall, 1967]{Marshall_1967}
Marshall, E.~I. (1967).
\newblock The {F}rattini subalgebra of a {L}ie algebra.
\newblock {\em Journal of the London Mathematical Society}, s1-42(1):416--422.

\bibitem[Medina and Revoy, 1985]{Medina_Revoy_1985}
Medina, A. and Revoy, P. (1985).
\newblock Alg{\`e}bres de {L}ie et produit scalaire invariant.
\newblock {\em Annales scientifiques de l'{\'E}cole normale sup{\'e}rieure},
  18(3):553--561.

\bibitem[Ovando, 2016]{Ovando_2016}
Ovando, G.~P. (2016).
\newblock {L}ie algebras with ad-invariant metrics: a survery-guide.
\newblock {\em Rendiconti Seminario Matematico Univ. Pol. Torino, Workshop for
  Sergio Console}, 74(1):243--268.

\bibitem[Rodr{\'\i}guez-Vallarte and Salgado,
  2018]{RodriguezVallarte_Salgado_2018}
Rodr{\'\i}guez-Vallarte, M. d.~C. and Salgado, G. (2018).
\newblock Geometric structures on {L}ie algebras and double extensions.
\newblock {\em Proceedings of the American Mathematical Society},
  146(10):4199--4209.

\bibitem[Sat{\^o}, 1971]{Sato_1971}
Sat{\^o}, T. (1971).
\newblock The derivations of the {L}ie algebras.
\newblock {\em Tohoku Mathematical Journal}, 23(1):21--36.

\bibitem[Tsou and Walker, 1957]{Tsou_Walker_1957}
Tsou, S.~T. and Walker, A.~G. (1957).
\newblock Xix. metrisable {L}ie groups and algebras.
\newblock {\em Proceedings of the Royal Society of Edinburgh. Section A.
  Mathematical and Physical Sciences}, 64(3):290--304.

\bibitem[Zusmanovich, 2014]{Zusmanovich_2014}
Zusmanovich, P. (2014).
\newblock A compendium of {L}ie structures on tensor products.
\newblock {\em Journal of Mathematical Sciences}, 199(3):266--288.

\end{thebibliography}

\end{document}